\documentclass[a4paper,11pt]{article}

\usepackage[margin=2.5cm]{geometry}

\usepackage{amsmath,amssymb,amsthm}
\usepackage{xfrac}

\usepackage[numbers,sort&compress]{natbib}
\usepackage{enumerate}
\usepackage[colorlinks=true,linkcolor=blue,citecolor=blue,urlcolor=blue]{hyperref}

\newtheorem{mainthm}{Theorem}
\newtheorem{theorem}{Theorem}[section]
\newtheorem{lemma}[theorem]{Lemma}
\newtheorem{proposition}[theorem]{Proposition}
\newtheorem{corollary}[theorem]{Corollary}

\theoremstyle{definition}
\newtheorem{definition}[theorem]{Definition}
\newtheorem{example}[theorem]{Example}

\theoremstyle{remark}
\newtheorem{remark}[theorem]{Remark}

\newcommand{\F}{\mathbb{F}}
\newcommand{\ov}[1]{\overline{#1}}
\DeclareMathOperator{\Tr}{Tr}
\DeclareMathOperator{\End}{End}

\title{\textbf{Twisted group codes of dimension 2 and 3}}

\author{%
  \'Alvaro Otero S\'anchez%
  \thanks{The author has been supported by the Spanish Ministry of Science, Innovation and Universities under the FPU 2023 grant program.} \\
  \small Department of Mathematics, Universidad de Almer\'ia, Almer\'ia, Spain \\
  \small \texttt{aos073@ual.es}%
}

\date{}

\begin{document}

\maketitle

\begin{abstract}
It is known that all group codes of dimension at most $3$ are permutation equivalent to abelian group codes. We prove that twisted group codes of dimension at most $3$ are monomially equivalent either to an abelian group code or to an irreducible constacyclic code, and in the second case we describe the code explicitly by means of a trace map. Monomial equivalence cannot be replaced by permutation equivalence, and the second case cannot be avoided, as we present a twisted group code that is not monomially equivalent to any group code. More generally, we give two infinite families, in dimensions $2$ and $3$, of irreducible constacyclic codes which satisfy the Plotkin bound, whereas every group code with the same length and dimension has smaller minimum distance.
\end{abstract}

\vspace{1ex}
\noindent\textbf{Keywords:} Group code, Twisted group algebra, Abelian group code \\
\noindent\textbf{MSC (2020):} 94B05, 94B60, 16S35

\setcounter{section}{-1}
\section{Introduction}
One of the modern tools to produce new linear codes with desirable properties are the so called group codes. Let $G$ be a group and $F$ be a field. Then the group ring $FG$ is the formal linear combination of elements of $G$, where the product is given by the group operation in $G$. This ring was initially studied as it has a strong connection with representation theory, where linear representations of a group are in bijection with modules over the group algebras. A group code, or $G-$code, is an ideal over a group algebra, which is usually defined with $G$ a finite group and $F$ a finite field of prime characteristic. 

In \cite{Berman} Berman proved that binary Reed Muller codes are ideals in a binary group algebra over an abelian group. In the following decade some mathematicians studied ideals over group algebras as they provide a rich framework to obtain new linear codes with interesting properties. 

Bernal, del R\'io and Sim\'on showed that one-dimensional group codes are abelian \cite{BdRS}, in the sense that they are permutation equivalent to a $G$-code with $G$ abelian. This result was extended by Garc\'ia Pillado, Gonz\'alez, Markov, Markova and Mart\'inez, who proved that the same is true in dimensions $2$ and $3$ \cite{GGMMM}. Since there are non-abelian group codes of dimension $4$ over every field of odd characteristic \cite{GGMM}, this settles the question of the smallest dimension of a non-abelian group code.

In this paper we consider the same question for twisted group codes. Let $F$ be a finite field and $G$ a finite group. Recall that a map $\alpha : G\times G\to F^*$ is a (normalized) $2$-cocycle if
\begin{equation}\label{eq:cocycle}
\alpha(x,y)\,\alpha(xy,z)=\alpha(y,z)\,\alpha(x,yz)\quad\text{and}\quad\alpha(x,1)=\alpha(1,x)=1
\end{equation}
for all $x,y,z\in G$. The twisted group algebra $F^\alpha G$ is the vector space with basis $\{\ov g:g\in G\}$ and multiplication $\ov g\,\ov h=\alpha(g,h)\,\ov{gh}$, extended bilinearly. Note that condition \eqref{eq:cocycle} says precisely that this multiplication is associative with identity $\ov1$. 

The ideals of $F^\alpha G$, regarded as codes in $F^G$ through the coordinates with respect to the basis $\{\ov g\}$, are the twisted group codes of de la Cruz and Willems \cite{dlCW}. For a cyclic group they are exactly the constacyclic codes (Example \ref{ex:consta}), so twisted group codes are a common generalization of group codes and constacyclic codes.

Monomially equivalent twisted group codes arise naturally in the study of twisted group algebras. A rescaling of the basis $\ov g\mapsto t(g)\,\ov g$ transforms $F^\alpha G$ into another twisted group algebra and every ideal into a monomially equivalent one (Proposition \ref{prop:graded}), and already in dimension one there are twisted group codes which are monomially, but not permutation, equivalent to group codes (Remark \ref{rem:perm}). So the natural question is whether every twisted group code of dimension at most $3$ is monomially equivalent to an abelian group code. To do so, we present a study of the twisted group codes of small dimension.

\begin{mainthm}\label{thm:main}
Let $I\not=0$ be an ideal of $F^\alpha G$ with $\dim_FI\le3$.
\begin{enumerate}[(a)]
\item If $\alpha$ is a coboundary, then $I$ is monomially equivalent to an abelian group code.
\item If $\alpha$ is not a coboundary, then $m=\dim_FI\in\{2,3\}$, and there are a field $L\supseteq F$ with $[L:F]=m$ and an $\alpha$-character $\rho\colon G\to L^*$ such that
\[
I=\Big\{\sum_{g\in G}\Tr_{L/F}\big(a\,\rho(g)^{-1}\big)\,\ov g\ :\ a\in L\Big\}.
\]
Moreover, $I$ is monomially equivalent to an irreducible constacyclic code of length $|G|$, that is, to a minimal ideal of $F[x]/(x^{|G|}-c)$ for some $c\in F^*$.
\end{enumerate}
\end{mainthm}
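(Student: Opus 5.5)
We would prove the two parts separately; part (b) carries most of the work. For (a): if $\alpha$ is a coboundary, $\alpha(x,y)=t(x)t(y)t(xy)^{-1}$, then the rescaling $\ov g\mapsto t(g)^{-1}\ov g$ gives an algebra isomorphism $F^\alpha G\cong FG$ that sends ideals to monomially equivalent ideals (Proposition \ref{prop:graded}). So $I$ is monomially equivalent to a group code of dimension at most $3$. By the theorem of \cite{BdRS,GGMMM}, such a code is permutation equivalent to an abelian group code, and (a) follows.

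For (b), let $\alpha$ be a non-coboundary and $I\neq0$ an ideal with $m=\dim I\le3$. First, $F^\alpha G$ is semisimple-like enough for our purposes: we work with the left regular action of the basis elements $\ov g$ on $I$. Each $\ov g$ is a unit, so $g\mapsto(\text{left mult. by }\ov g)|_I$ is a projective representation $G\to GL(I)$ with cocycle $\alpha$. We would pass to a minimal ideal $J\subseteq I$ and study its structure. The key observation is that if some ideal of dimension $\le3$ affords a one-dimensional subrepresentation over $F$, we obtain an $\alpha$-character $G\to F^*$. This forces $\alpha$ to be a coboundary. Hence every composition factor of $I$ as a projective representation has dimension $>1$ over $F$. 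We then use the two-sided structure. $I$ is a bimodule, and the images $A$ of $F^\alpha G$ in $\End_F(I)$ under left and right multiplication commute. We aim to show that $I$ is simple, so that $I=J$, with $\End(J)$ a field $L$ of degree $m$. For $m\le3$, a simple module of dimension $m$ whose image algebra is non-commutative would be $M_m(F)$ acting (division algebras over finite fields are fields), together with a commuting right action. Counting dimensions, we must have $\dim I\ge m^2$ or the bimodule structure is commutative. This gives $A=L$, a field with $[L:F]=m$, and $I\cong L$ with $\ov g$ acting by multiplication by $\rho(g)\in L^*$, where $\rho$ is an $\alpha$-character. The value $m=1$ is excluded since then $L=F$ and $\alpha$ would be a coboundary. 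We must also rule out a non-simple $I$ with a one-dimensional factor plus a two-dimensional one, which needs the one-dimensional factor excluded as above.

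The explicit description comes from identifying the surjection $F^\alpha G\to L$, $\ov g\mapsto\rho(g)$. Its kernel is a complement. Using the trace form, the minimal two-sided ideal corresponding to this quotient is the set of elements $\sum_g\Tr_{L/F}(a\rho(g)^{-1})\ov g$. We would verify directly that this set is closed under multiplication by $\ov h$: multiplying gives $\Tr(a\rho(h)\cdots)$ after using $\rho(h)\rho(h^{-1}g)=\alpha(h,h^{-1}g)\rho(g)$. It has dimension $m$, because trace is nondegenerate and the $\rho(g)$ span $L$. Finally, $I$ must equal it, since both are the isotypic component for $\rho$ and its Galois conjugates.

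For the constacyclic claim, $\rho(G)\subseteq L^*$ is cyclic. Reordering $G$ as a list $g_0,\dots,g_{n-1}$ compatible with a generator is not directly possible, so instead we choose $\beta\in L^*$ with $L=F(\beta)$ and $\beta^{n}=c\in F^*$. We then rescale the coordinates by monomial factors $\rho(g_i)\beta^{-i}\in F^*$, arranged via coset representatives of $\ker$ of $\rho$ composed with $L^*\to L^*/F^*$. This makes the code equal to $\{(\Tr(a\beta^{-i}))_i\}$, which is a minimal ideal of $F[x]/(x^n-c)$. This last rescaling step, showing $\rho(g)$ can be matched with powers of a single element up to $F^*$-scalars so that the map is genuinely monomial and $\beta^n\in F$, is the step I expect to be the main obstacle. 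I would first try it using that $\rho(G)F^*/F^*$ is cyclic of order dividing $n$, and adjusting $\beta$ when its order does not divide $n$ exactly.
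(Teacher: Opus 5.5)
Your part (a), your reduction to $I$ being simple as a left module, and your constacyclic step are fine. The last one is essentially Proposition \ref{prop:cyclic}, and the ``obstacle'' you flag disappears if you take $\beta$ with $\beta F^*$ generating the cyclic group $\bar\rho(G)\le L^*/F^*$. Its order divides $n$, so $\beta^n\in F^*$ automatically, and $L=F(\beta)$ because $\rho(G)\subseteq\bigcup_j\beta^jF^*$. The fibres of $\bar\rho$ and of $i\mapsto\beta^iF^*$ on $\{0,\dots,n-1\}$ all have size $n/|\bar\rho(G)|$, which gives the required ordering of $G$. The real problems are in the middle of (b), and both come from tacitly treating $F^\alpha G$ as semisimple, which fails whenever $\mathrm{char}\,F$ divides $|G|$.

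\textbf{First gap: the case $\End({}_{F^\alpha G}I)=F$.} Here the left action has image $M_m(F)$, and you exclude it by ``counting dimensions, $\dim I\ge m^2$''. That is a Wedderburn-block heuristic with no justification in the modular case. This case genuinely occurs for one-sided ideals (Example \ref{ex:quaternion}), so the exclusion must use two-sidedness explicitly. The argument is as follows. Right multiplications commute with left ones, so they lie in $D=\End({}_{F^\alpha G}I)$. If $D=F$, each $\ov g$ acts on the right of $I$ by a scalar, so every line in $I$ is a right ideal, contradicting Lemma \ref{lem:onedim} for right modules. Only then do you get $[D:F]=m$, $\dim_DI=1$ and the $\alpha$-character $\rho$.

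\textbf{Second gap: the identification $I=T_\rho$.} Your justification (``the kernel is a complement'', ``both are the isotypic component'') is false as stated. Take $F=\F_3$, $G=C_6$, $c=-1$. Then $F^\alpha G\cong\F_3[x]/((x^2+1)^3)$ is local and $\alpha$ is not a coboundary. Its unique minimal ideal $I=((x^2+1)^2)$ has dimension $2$ and satisfies $I^2=0$. It lies \emph{inside} the kernel $(x^2+1)$ of $x\mapsto i$ (with $i^2=-1$ in $\F_9$), rather than complementing it, and every composition factor of the whole algebra is $\F_9$.

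The conclusion $I=T_\rho$ still holds there. Uniqueness of a left ideal isomorphic to $L_\rho$ is in fact true in general: twisted group algebras are symmetric, so $L_\rho$ occurs exactly once in the socle. But you give no valid argument for it. The paper avoids all of this with a direct computation:
\begin{enumerate}[(1)]
\item Fix $0\ne x_0\in I$, so $I=Dx_0$.
\item Let $\varepsilon$ be the coefficient of $\ov1$. By trace duality there is $\theta\in D$ with $\varepsilon(\delta x_0)=\Tr_{D/F}(\theta\delta)$.
\item For $y=\delta x_0$, \eqref{eq:coef} and $\rho(g^{-1})=\alpha(g,g^{-1})\rho(g)^{-1}$ give $y_g=\Tr_{D/F}(\theta\delta\rho(g)^{-1})$.
\end{enumerate}
Hence $I=T_\rho$ with $L=D$.
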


Therefore, every twisted group code of dimension at most $3$ is monomially equivalent to an abelian group code or to an irreducible constacyclic code. Remark \ref{rem:perm} shows that monomial equivalence cannot be replaced by permutation equivalence. A natural question is whether the second case can be avoided, that is, all twisted group codes are monomially equivalent to a group code. We prove that it is not the case, as the following result shows.

\begin{mainthm}\label{thm:tetra}
The ternary tetracode, of parameters $[4,2,3]_3$, is an ideal of the commutative twisted group algebra $\F_3[x]/(x^4+1)$ of the cyclic group of order $4$, but it is not monomially equivalent to any group code.
\end{mainthm}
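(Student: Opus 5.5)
We must prove Theorem \ref{thm:tetra}: the tetracode is an ideal of $\F_3[x]/(x^4+1)$, and it is not monomially equivalent to any group code.

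\emph{Step 1: the tetracode is an ideal.} Over $\F_3$ we factor $x^4+1=(x^2+x+2)(x^2+2x+2)$; both factors are irreducible, since $x^8-1$ splits over $\F_9$ and a primitive $8$th root of unity has degree $2$ over $\F_3$. Take $I=(x^2+x+2)$, a minimal ideal of dimension $4-2=2$. It is spanned by the coefficient vectors of $x^2+x+2$ and $x(x^2+x+2)=x^3+x^2+2x$, namely $(2,1,1,0)$ and $(0,2,1,1)$. Listing the $8$ nonzero codewords shows every weight is $3$. A $[4,2,3]_3$ code is MDS, and such a code is unique up to monomial equivalence; alternatively we exhibit an explicit monomial map to the standard generator matrix $\begin{pmatrix}1&0&1&1\\0&1&1&2\end{pmatrix}$. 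So $I$ is the tetracode.

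\emph{Step 2: reduce to abelian group codes.} Suppose $C$ is monomially equivalent to an ideal of a group algebra $\F_3G$ with $|G|=4$. Since $C$ has dimension $2$, part (a) of Theorem \ref{thm:main}, with $\alpha$ trivial (or directly \cite{GGMMM}), shows we may take $G$ abelian, so $G\cong C_4$ or $G\cong C_2\times C_2$. Monomial equivalence preserves length, dimension and minimum distance, so it suffices to show that no $2$-dimensional ideal of $\F_3C_4$ or of $\F_3[C_2\times C_2]$ has minimum distance $3$.

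\emph{Step 3: enumerate the ideals.} Both algebras are semisimple, since the characteristic $3$ does not divide $4$.

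For $C_4$ we have $x^4-1=(x-1)(x+1)(x^2+1)$ with $x^2+1$ irreducible over $\F_3$. The $2$-dimensional ideals are the ideal generated by $x^2+1$, which contains $x^2+1$ of weight $2$, and the ideal $(x^2-1)$, which contains $x^2-1$ of weight $2$.

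For $C_2\times C_2$ the algebra splits as $\F_3^4$ via the four characters. A $2$-dimensional ideal is the sum of two character eigenspaces, so it is spanned by two of the vectors $(1,\pm1,\pm1,\pm1)$ with product of signs $+1$, i.e.\ two rows of a $4\times4$ Hadamard matrix. The sum or difference of two such rows has weight $2$.

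Hence every $2$-dimensional group code of length $4$ has minimum distance at most $2$, which contradicts $d(C)=3$ and proves the theorem.

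The main obstacle is only bookkeeping: we must make sure all $2$-dimensional ideals are listed in both cases. Semisimplicity handles this cleanly, because every ideal is a sum of Wedderburn components.
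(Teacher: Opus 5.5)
Your proof is correct, but for the non-equivalence part it takes a different route from the paper.

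\textbf{The paper's route.} The paper gets the theorem as the case $q=3$, $k=2$, $r=1$ of Theorem \ref{thm:simplex}.
\begin{itemize}
\item The code is the twisted trace code $T_\rho$ with $\rho(g^i)=\zeta^i$, where $\zeta$ generates $\F_9^*$. Its weights come from Lemma \ref{lemm:eta}, and its identification with the tetracode comes from Lemma \ref{lem:simplex}(c), which rests on Bonisoli's theorem.
\item The bound $d\le 2$ for group codes comes from part (iii), a structural argument valid for any group of order $n$.
\item In that argument, a code meeting the Plotkin bound has each projective class of coordinate functionals represented exactly $r$ times. These classes are the cosets of the subgroup $S$ of Lemma \ref{lem:orbit}, so $|S|=r$.
\item An element $x$ of order $k$, given by Cauchy's theorem, yields an eigenvector of $\psi\mapsto\psi\circ\rho_I(x)$. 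That eigenvector is proportional to some $\varepsilon_g$, which forces $g^{-1}xg\in S$ and hence $k\mid r$, a contradiction.
\end{itemize}

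\textbf{Your route.} You verify everything by hand.
\begin{itemize}
\item Your explicit factorization and list of the eight weight-$3$ codewords match the paper's closing remark on the tetracode.
\item You use uniqueness of the $[4,2,3]_3$ MDS code. You only assert this, but it is a two-line systematic-form check.
\item You enumerate all $2$-dimensional ideals of the semisimple algebras $\F_3C_4$ and $\F_3[C_2\times C_2]$, and each visibly contains a weight-$2$ word: $x^2\pm1$, or the sum or difference of two Hadamard rows.
\end{itemize}

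\textbf{Comparison.} Your argument is elementary and self-contained: it needs neither Bonisoli's theorem nor the $S$-subgroup argument. However, it is tied to length $4$, where all groups and all their ideals can be listed. The paper's argument produces the infinite families of Corollary \ref{cor:families} and handles nonabelian groups of order $n$ without any enumeration.

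One simplification: in Step 2 you do not need Theorem \ref{thm:main}(a) or the classification of group codes of dimension at most $3$, since every group of order $4$ is already abelian.
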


This code is not an isolated example, as we are able to provide a family of twisted group codes with better parameters than any group code.

\begin{mainthm}\label{thm:families}
Let $k\in\{2,3\}$, let $r\ge1$ and let $n=rN_k$ with  $N_k=(q^k-1)/(q-1)$.
\begin{enumerate}[(i)]
\item There is a minimal ideal of $F[x]/(x^n-c)$, for some $c\in F^*$, which is a code of length $n$, dimension $k$ and minimum distance $rq^{k-1}$.
\item If $k$ does not divide $q-1$, there is also a minimal ideal of $FC_n$ of length $n$, dimension $k$ and minimum distance $rq^{k-1}$.
\item If $k$ divides $q-1$ and $k$ does not divide $r$, then every ideal of a group algebra $FG$ with $|G|=n$ and dimension $k$ has minimum distance at most $rq^{k-1}-1$. Consequently, the code of (i) is not monomially equivalent to any group code.
\end{enumerate}
\end{mainthm}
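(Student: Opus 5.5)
The code in (i) will be the trace code of a Singer-type construction. Let $L=\F_{q^k}$ and let $\beta$ be a primitive element of $L$. Put $\gamma=\beta^{N_k/1}$-type elements aside and work directly with $\omega=\beta^{1/r}$ in a suitable extension. Concretely, I consider the map $\rho\colon C_n\to L^*$, $\rho(x^i)=\theta^{i}$, where $\theta\in L^*$ is chosen so that $\theta^{n}=c\in F^*$ and $\theta$ generates $L$ over $F$. The resulting code is
\[
I=\Big\{\textstyle\sum_{i=0}^{n-1}\Tr_{L/F}(a\,\theta^{-i})\,x^i : a\in L\Big\}.
\]
By the description in Theorem~\ref{thm:main}(b), or by checking directly that multiplication by $x$ corresponds to $a\mapsto a\theta$ up to the twist $c$, this is a minimal ideal of $F[x]/(x^n-c)$ of dimension $k$. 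The natural choice is $\theta$ with $\theta^{N_k}$ generating $F^*$ modulo nothing, i.e.\ $\theta=\beta$ when $r=1$. For general $r$, I take $\theta=\beta$ and $c=\beta^{n}$, which lies in $F$ because $N_k\mid n$ and $\beta^{N_k}\in F^*$.

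For the minimum distance in (i), the key observation is that $\theta^{i}$ for $i=0,\dots,N_k-1$ runs over representatives of the projective points $L^*/F^*$. Each of these is hit exactly $r$ times as $i$ ranges over $0,\dots,n-1$, with scalar multiples from $F^*$ in between. For $a\neq0$, $\Tr(a\theta^{-i})=0$ exactly when $\theta^{-i}$ lies in the hyperplane $a^{-1}\ker\Tr$. That hyperplane contains $N_{k-1}$ projective points. So the weight is $r(N_k-N_{k-1})=rq^{k-1}$, and all nonzero codewords have this constant weight, which is the Plotkin bound.

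For (ii) I need $c=1$, so I want $\theta$ of order dividing $n$ with $F(\theta)=L$ and with $\theta^i$ hitting each projective point equally often. I take $\theta=\beta^{e}$ with $e$ coprime to $N_k$. Then $\theta$ modulo $F^*$ still generates the cyclic group $L^*/F^*$ of order $N_k$. We also need $\theta^{n}=1$, which holds if $\theta^{N_k}$ has order dividing $r$ in $F^*$; it suffices to take $\theta^{N_k}=1$, i.e.\ $\theta$ of order $N_k$. This works provided $F^*\cap\langle\beta^{q-1}\rangle$ meets each coset once, which requires $\gcd(N_k,q-1)=1$. Now $N_k\equiv k\pmod{q-1}$, so the gcd equals $\gcd(k,q-1)$, and for $k\in\{2,3\}$ prime this is $1$ exactly when $k\nmid q-1$. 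The weight computation from (i) then applies verbatim, and $FC_n$ is the untwisted case.

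Part (iii) is the main obstacle. By \cite{GGMMM}, a $k$-dimensional group code with $k\le 3$ is permutation equivalent to an abelian group code, so I may assume $G$ is abelian. In the semisimple case, a $k$-dimensional ideal is a sum of minimal ideals. I would argue that if the minimum distance reached $rq^{k-1}$, the Plotkin bound would force an equidistant code with no zero coordinates, hence a projective-geometry-type (simplex repetition) code with every projective point repeated exactly $r$ times. Because the ideal is $G$-invariant, $G$ acts on the columns: it acts on the $n$ coordinates regularly, inducing an action on the $N_k$ projective points of $\mathrm{PG}(k-1,q)$ through the $k$-dimensional representation of $G$ on $I$. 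The image of $G$ in $\mathrm{PGL}_k(q)$ must act transitively on the points with all stabilizers of the same size. Since $G$ is abelian, this image acts regularly on the points, so it is an abelian subgroup of order $N_k$ acting regularly. For $k\in\{2,3\}$, such a group lies in a Singer cycle. The preimage in $\mathrm{GL}_k$ restricted to the columns then forces the column multipliers in $F^*$ to be consistent, which yields the condition that $\gcd(N_k,q-1)=k$ must divide $r$. That is where I expect to have to work hardest: showing that the lift of $G$ to actual column vectors needs an element of $L^*$ of order $N_k$ modulo $F^*$ whose $N_k$-th power is trivial, which fails when $k\mid q-1$ and $k\nmid r$. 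Non-semisimple cases and non-regular images should be handled by showing that a repeated or zero coordinate pattern lowers the weight. The final sentence of (iii) then follows, since monomial equivalence preserves parameters.
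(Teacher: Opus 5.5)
Your parts (i) and (ii) follow the paper's construction: a primitive element $\theta$ of $L=\F_{q^k}$ with $c=\theta^n$, respectively $\theta$ of order $N_k$ when $\gcd(N_k,q-1)=\gcd(k,q-1)=1$. Your projective-point count is exactly Lemma~\ref{lemm:eta}. One small correction: the fact that the trace code is a minimal ideal comes from the direct check of Lemma~\ref{lem:trace}, not from Theorem~\ref{thm:main}(b), which goes in the opposite direction.

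Part (iii), however, is not proved, for two reasons.
\begin{itemize}
\item The assertion that an abelian subgroup of $\mathrm{PGL}_k(q)$ of order $N_k$ acting regularly on the points lies in a Singer cycle is false. For $q=3$, $k=2$, the group $\mathrm{PGL}_2(3)\cong S_4$ acts on the four points of $\mathbb{P}^1(\F_3)$ as on four letters. Its normal Klein four-subgroup is abelian and regular but not cyclic; its preimage in $\mathrm{GL}_2(3)$ is the quaternion group of Example~\ref{ex:quaternion}.
\item More seriously, the step that actually yields $k\mid r$ is left as a placeholder (``the column multipliers are forced to be consistent''). Your closing remark about non-semisimple cases and non-regular images is also beside the point. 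Plotkin equality plus Bonisoli already applies to every ideal, and a faithful transitive abelian group is automatically regular.
\end{itemize}

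Your framework can be rescued by using that $\rho_I(G)\subseteq\mathrm{GL}(I)$ is itself abelian, not merely its image in $\mathrm{PGL}$.
\begin{itemize}
\item Transitivity on points means $I^*$, hence $I$, has no proper nonzero $G$-invariant subspace.
\item Then $E=F[\rho_I(G)]$ is a commutative subring of the finite division ring $\mathrm{End}_{FG}(I)$, so it is a field. Since $E$-subspaces of $I$ are $G$-invariant, $\dim_EI=1$. Hence $E\cong\F_{q^k}$ and $\rho_I(G)\subseteq E^*$.
\item The image of $G$ in $E^*/F^*$ has order $N_k$, so some $\eta=\rho_I(y)$ satisfies $\langle\eta\rangle F^*=E^*$. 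Thus $\mathrm{lcm}(\mathrm{ord}\,\eta,q-1)=(q-1)N_k$.
\item Since $k\mid N_k$, comparing $k$-adic valuations gives $v_k(\mathrm{ord}\,\eta)=v_k(q-1)+v_k(N_k)$.
\item Since $\eta^{rN_k}=1$, it follows that $v_k(r)\ge v_k(q-1)\ge1$.
\end{itemize}

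The paper's argument is shorter and needs neither \cite{GGMMM} nor commutativity.
\begin{itemize}
\item By Cauchy, take $x\in G$ of order $k$.
\item Since $X^k-1$ has $k$ distinct roots in $F$, the map $\psi\mapsto\psi\circ\rho_I(x)$ on $I^*$ has an eigenvector $\varphi$.
\item By Bonisoli, $\varphi\in F^*\varepsilon_g$ for some $g$. So $g^{-1}xg$ lies in the stabilizer $S$ of $F^*\varepsilon_1$ (Lemma~\ref{lem:orbit}), which has exactly $r$ elements.
\item Lagrange then gives $k\mid r$.
\end{itemize}
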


For $q=3$, $k=2$ and $r=1$ the code of Theorem \ref{thm:families} is the tetracode, so Theorem \ref{thm:tetra} is a particular case of Theorem \ref{thm:families}.

The codes appearing in Theorem \ref{thm:main}(b), which we call twisted trace codes, make sense in any dimension, and we show that they are always monomially equivalent to irreducible constacyclic codes (Proposition \ref{prop:cyclic}). As in \cite{GGMMM}, our results concern two-sided ideals. For one-sided ideals the situation is different already for group algebras, since there are non-abelian left group codes of dimension $2$ \cite[Proposition 3.3]{BdRS}. In Example \ref{ex:quaternion} we give a one-sided twisted group code of length $4$ and dimension $2$ whose minimum distance exceeds that of every group code with the same length and dimension.

The paper is organized as follows. In Section \ref{sec:prelim} we recall some basic definitions and prove some auxiliary results. In Section \ref{sec:trace} we introduce twisted trace codes and prove Theorem \ref{thm:main}. In Section \ref{sec:examples} we prove Theorems \ref{thm:families} and \ref{thm:tetra}, and we give some further examples. Finally, in Section \ref{sec:concl} we discuss some open questions.

\section{Preliminaries}\label{sec:prelim}

For the rest of the paper, let $F=\F_q$ be a finite field, let $G$ be a finite group and let $\alpha$ be a normalized $2$-cocycle of $G$ with values in $F^*$. We identify an element $y=\sum_gy_g\,\ov g$ of $F^\alpha G$ with the vector $(y_g)_{g\in G}\in F^G$, so that supports, weights and minimum distances are defined as usual.

Two codes $\mathcal C\subseteq F^X$ and $\mathcal C'\subseteq F^Y$, with $|X|=|Y|$, are monomially equivalent \cite{HuffmanPless2003} if there exist a bijection $\pi\colon Y\to X$ and scalars $\kappa_y\in F^*$ such that $\mathcal C'=\{(\kappa_y\,c_{\pi(y)})_{y\in Y}:c\in\mathcal C\}$, and permutation equivalent if this holds with all $\kappa_y=1$. Monomially and permutation equivalent codes have the same length, dimension and weight distribution.

From the identities of \eqref{eq:cocycle} we can prove that for all $g\in G$ and $y\in F^\alpha G$, with $y=\sum_{g\in G} y_g \overline{g}$ we have that
\begin{equation}\label{eq:coef}
\ov g\,y=\sum_{x\in G}\alpha(g,g^{-1}x)\,y_{g^{-1}x}\,\ov x,\qquad\alpha(g,g^{-1})=\alpha(g^{-1},g).
\end{equation}
In particular every $\ov g$ is a unit of $F^\alpha G$, and the coefficient of $\ov1$ in $\ov{g^{-1}}\,y$ is $\alpha(g^{-1},g)\,y_g$, which will be used in following sections.

\begin{definition}
    Let $G$ be a group, and $F$ be a field. A $2-$cocycle $\alpha\in Z^2(G,F^*)$ is said to be a $2-$coboundary if there exists a map $t:G\longrightarrow F^*$ such that
    \begin{equation*}
        \alpha(g,h) = t(g)t(h) t(gh)^{-1} \quad \forall g,h\in G
    \end{equation*}
    The set of all $2-$coboundaries will be denoted by $B^2(G,F^*)$. If $L$ is a finite extension of $F$, a map $\rho\colon G\to L^*$ is called an $\alpha$-character if $\rho(g)\rho(h)=\alpha(g,h)\rho(gh)$ for all $g,h\in G$. When the context is clear, we will simply refer to $2$-cocycles as cocycles.
\end{definition}

The following result proves that coboundaries play a crucial role in the structure of twisted group algebras. For further details on the cohomology of groups and its computation, see \cite{Brown1982}. 
\begin{proposition}[\cite{Linckelmann2018vol2}, Proposition 1.2.6]
     Let $G$ be a group, and $F$ be a field. Then
     \begin{enumerate}
     \renewcommand{\labelenumi}{(\roman{enumi})}
         \item If $\alpha,\beta\in Z^2(G,F^*)$, then the product given by 
         \begin{align*}
             \alpha\beta: G\times G &\longrightarrow F^* \\
             (g,h) &\longmapsto \alpha(g,h) \beta(g,h)
         \end{align*}
     
     gives to $Z^2(G,F^*)$ the structure of abelian group.
     \item $B^2(G,F^*)$ is a normal subgroup of $Z^2(G,F^*)$, and we will call it quotient $H^2(G,F^*)= Z^2(G,F^*)/B^2(G,F^*)$ the second cohomology group of $G$ with coefficient in $F$.      
     \item Let  $\alpha,\beta\in Z^2(G,F^*)$. Then $F^\alpha G \simeq F^\beta G$ as graded algebras if and only if $[\alpha] = [\beta]$ in $H^2(G,F^*)$.
     \item Let  $\alpha,\beta\in Z^2(G,F^*)$. Then $[\alpha] = [\beta]$ in $H^2(G,F^*)$. if and only if there exist a map $t:G\longrightarrow F^*$ such that $\alpha(g,h)=\beta(g,h)\,t(g)t(h)t(gh)^{-1}$ for all $g,h\in G$.
     \end{enumerate}
\end{proposition}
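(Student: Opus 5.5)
The statement just given is the standard cohomology proposition cited from Linckelmann, so my plan is to verify its four items directly from the cocycle identity \eqref{eq:cocycle}.

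\emph{Item (i).} The product of two normalized cocycles is again a normalized cocycle. This is immediate, because both sides of \eqref{eq:cocycle} are multiplicative in $\alpha$ and $F^*$ is abelian. The constant map $1$ is the identity element. The pointwise inverse $\alpha^{-1}$ is again a cocycle by the same argument. Commutativity and associativity are inherited from $F^*$.

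\emph{Item (ii).} For $t\colon G\to F^*$ put $(\delta t)(g,h)=t(g)t(h)t(gh)^{-1}$.
\begin{itemize}
\item A one-line expansion shows that $\delta t$ satisfies the first identity in \eqref{eq:cocycle}: both sides equal $t(x)t(y)t(z)t(xyz)^{-1}$.
\item Normalization $\delta t(g,1)=t(1)$ forces $t(1)=1$, which we impose.
\item The map $t\mapsto\delta t$ is a homomorphism from the group of maps $G\to F^*$, so its image $B^2$ is a subgroup.
\end{itemize}
Normality is automatic since $Z^2$ is abelian, so the quotient $H^2(G,F^*)$ is well defined.

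\emph{Item (iv).} This is merely a restatement of $\alpha\beta^{-1}\in B^2(G,F^*)$, so it follows directly from the definition of $B^2$.

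\emph{Item (iii).} This is the substantive part, and I will interpret "graded isomorphism" as an algebra isomorphism $\varphi$ with $\varphi(F\ov g)=F\ov g$ for each $g$.
\begin{itemize}
\item If $\alpha=\beta\,\delta t$, define $\varphi(\ov g)=t(g)\,\ov g$, where $\ov g$ on the right is taken in $F^\beta G$. Then
\[
\varphi(\ov g)\varphi(\ov h)=t(g)t(h)\beta(g,h)\,\ov{gh}=\alpha(g,h)\,t(gh)\,\ov{gh}=\varphi(\ov g\,\ov h),
\]
so $\varphi$ is multiplicative. It maps a basis to a basis, hence it is a bijective graded algebra map.
\item Conversely, a graded isomorphism must send $\ov g$ to $t(g)\,\ov g$ for some scalar $t(g)$, which lies in $F^*$ because $\varphi$ is bijective. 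Comparing $\varphi(\ov g\,\ov h)$ with $\varphi(\ov g)\varphi(\ov h)$ gives exactly $\alpha=\beta\,\delta t$.
\item Unitality of $\varphi$ gives $t(1)=1$, consistent with normalization.
\end{itemize}

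The only point needing care is this converse direction of (iii). There one must use that a graded map preserves each homogeneous line $F\ov g$, and that bijectivity rules out $t(g)=0$.
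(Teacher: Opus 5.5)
Your proposal is correct. Items (i), (ii) and (iv) are routine pointwise verifications, and the normalization $t(1)=1$ that you impose is in fact automatic for any $t$ realizing a normalized coboundary, since $1=\alpha(1,1)=t(1)t(1)t(1)^{-1}=t(1)$. For (iii), the key point is, as you say, that a graded isomorphism must be a rescaling $\ov g\mapsto t(g)\,\ov g$ with $t(g)\in F^*$.

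The paper gives no proof of this statement; it is quoted from Linckelmann. The one computation the paper does carry out, in Proposition \ref{prop:graded}, is exactly your comparison of $\varphi(\ov g)\varphi(\ov h)$ with $\varphi(\ov g\,\ov h)$, so your argument follows the same approach.
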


We say that $\alpha,\beta \in Z^2(G,F^*)$ are cohomologous if $[\alpha] = [\beta]$ in $H^2(G,F^*)$.  This play a crucial role in code theory, as the following result states that cohomologous cocycles gives monomially equivalent codes.

\begin{proposition}\label{prop:graded}
Let $\alpha,\beta$ be cocycles of $G$ and let $t\colon G\to F^*$ a map with $t(1)=1$. The linear map $\Phi_t\colon F^\alpha G\to F^\beta G$, $\ov g\mapsto t(g)\,\ov g$, is an isomorphism of algebras if and only if $\alpha(g,h)=\beta(g,h)\,t(g)t(h)t(gh)^{-1}$ for all $g,h\in G$. In this case $\Phi_t$ maps left (right, two-sided) ideals onto left (right, two-sided) ideals, and every code onto a monomially equivalent code. In particular, if $\alpha$ is a coboundary, then every ideal of $F^\alpha G$ is monomially equivalent to an ideal of $FG$ of the same kind.
\end{proposition}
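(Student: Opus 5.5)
The plan is to verify the multiplicativity condition directly on basis elements and then read off the remaining claims from the explicit form of $\Phi_t$.

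\emph{Isomorphism criterion.} Since $\Phi_t$ sends the basis $\{\ov g\}$ of $F^\alpha G$ to the nonzero multiples $\{t(g)\ov g\}$ of the basis of $F^\beta G$, it is a linear bijection. It is therefore an algebra isomorphism exactly when it is multiplicative, and by bilinearity it suffices to check products of basis elements. In $F^\alpha G$ we have $\ov g\,\ov h=\alpha(g,h)\ov{gh}$, so
\[
\Phi_t(\ov g\,\ov h)=\alpha(g,h)\,t(gh)\,\ov{gh}.
\]
In $F^\beta G$ we have
\[
\Phi_t(\ov g)\Phi_t(\ov h)=t(g)t(h)\,\beta(g,h)\,\ov{gh}.
\]
These agree for all $g,h\in G$ if and only if $\alpha(g,h)t(gh)=\beta(g,h)t(g)t(h)$, which is the stated condition since $t(gh)\in F^*$. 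The hypothesis $t(1)=1$ makes $\Phi_t(\ov1)=\ov1$, so unity is preserved; this also follows from multiplicativity and bijectivity.

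\emph{Ideals and monomial equivalence.} An algebra isomorphism carries left, right and two-sided ideals onto ideals of the same kind: if $I$ is a left ideal, then
\[
\Phi_t(y)\Phi_t(I)=\Phi_t(yI)\subseteq\Phi_t(I),
\]
and surjectivity of $\Phi_t$ means every element of $F^\beta G$ has the form $\Phi_t(y)$. The right and two-sided cases are the same. For the coding statement, $\Phi_t(\sum_g y_g\ov g)=\sum_g t(g)y_g\ov g$. In coordinates, $\Phi_t(\mathcal C)=\{(t(g)c_g)_{g\in G}:c\in\mathcal C\}$, which is monomial equivalence with $\pi=\mathrm{id}$ and $\kappa_g=t(g)$.

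\emph{Coboundary case.} If $\alpha(g,h)=t(g)t(h)t(gh)^{-1}$, take $\beta=1$, so that $F^\beta G=FG$. The condition holds, and from $\alpha(1,1)=1$ we get $t(1)=1$, so the normalization hypothesis is automatic. The previous step then shows every ideal of $F^\alpha G$ is monomially equivalent to an ideal of $FG$ of the same kind.

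There is no real obstacle; the only point needing care is that multiplicativity on basis elements suffices, which is immediate from bilinearity.
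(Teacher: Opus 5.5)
Your proof is correct and follows the paper's argument: you compare $\Phi_t(\ov g\,\ov h)=\alpha(g,h)t(gh)\,\ov{gh}$ with $\Phi_t(\ov g)\Phi_t(\ov h)=t(g)t(h)\beta(g,h)\,\ov{gh}$ on basis elements, and you read off the ideal and monomial-equivalence claims from the fact that $\Phi_t$ rescales the coordinate at $g$ by $t(g)$. You also make explicit two points the paper leaves implicit: that $\Phi_t$ is a linear bijection, and that $t(1)=1$ holds automatically in the coboundary case because $\alpha(1,1)=t(1)$.
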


\begin{proof}
We have the following identities.
\begin{align*}
    \Phi_t(\overline{g})\Phi_t(\overline{h}) &= t(g)t(h)\,\overline{g}\overline{h} = t(g)t(h)\beta(g,h)\,\overline{gh} \\
    \Phi_t(\overline{g}\,\overline{h}) &= \Phi_t(\alpha(g,h)\,\overline{gh}) = \alpha(g,h)t(gh)\,\overline{gh}
\end{align*}
Therefore both identities are equal if and only if $\alpha(g,h)=\beta(g,h)\,t(g)t(h)t(gh)^{-1}$, which is equivalent to say that $\Phi_t$ is an algebra isomorphism. The remaining assertions are clear, since $\Phi_t$ multiplies the coordinate indexed by $g$ by $t(g)$. \qed
\end{proof}

\begin{example}\label{ex:consta}
Let $G=\langle g\rangle$ be a cyclic group of order $n$ and let $c\in F^*$. For $0\le i,j<n$ we define $\alpha(g^i,g^j)=1$ if $i+j<n$ and $\alpha(g^i,g^j)=c$ otherwise. It is easy to check that it is a cocycle, and that the map $x^i\mapsto\ov{g^i}$ is an isomorphism of algebras $F[x]/(x^n-c)\to F^\alpha G$. Therefore, the ideals of $F^\alpha G$ are precisely the constacyclic codes of length $n$. In addition, it is well known \cite{Brown1982} that all cohomology classes of a cyclic group have a representative of the form of $\alpha$. Therefore, every twisted group code of a cyclic group is monomially equivalent to a constacyclic code.
\end{example}

The following easy observation give us the structure of one dimensional twisted group codes. This will be used to study the codes of dimension $2$ and $3$ over a twisted group algebra.

\begin{lemma}\label{lem:onedim}
$F^\alpha G$ has a left or right module $M$ with $\dim_FM=1$ if and only if $\alpha$ is a coboundary.
\end{lemma}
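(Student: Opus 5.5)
The plan is to show that a one-dimensional module is the same thing as an $\alpha$-character with values in $F^*$, and that such a character exists exactly when $\alpha$ is a coboundary. Throughout, a module over $F^\alpha G$ is assumed unital, so $\ov1$ acts as the identity; we need this because $\ov1$ is the identity of the algebra.

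\emph{Only if.} Suppose $M=Fm$ is a left module of dimension one. Each $\ov g$ acts by some scalar, so $\ov g\,m=\lambda(g)\,m$ with $\lambda(g)\in F$. By \eqref{eq:coef} every $\ov g$ is a unit of $F^\alpha G$. Hence $\ov g$ acts bijectively on $M$, and $\lambda(g)\in F^*$. Comparing the actions of $\ov g\,\ov h$ and of $\alpha(g,h)\,\ov{gh}$ on $m$ gives
\[
\lambda(g)\lambda(h)=\alpha(g,h)\lambda(gh).
\]
So $\lambda$ is an $\alpha$-character with values in $F^*$. Setting $t=\lambda$ we get $\alpha(g,h)=t(g)t(h)t(gh)^{-1}$, so $\alpha$ is a coboundary. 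We also have $t(1)=1$, since $\ov1$ acts as the identity.

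For a right module the same computation applies, with scalars composed in the order $h$ then $g$. Because $F$ is commutative, this again gives $\lambda(g)\lambda(h)=\alpha(g,h)\lambda(gh)$.

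\emph{If.} Suppose $\alpha(g,h)=t(g)t(h)t(gh)^{-1}$. The definition of a coboundary puts no condition on $t(1)$, but the normalization $\alpha(1,1)=1$ forces $t(1)=1$. We could also invoke Proposition \ref{prop:graded} with $\beta=1$. That gives an algebra isomorphism $\Phi_t\colon F^\alpha G\to FG$, and we can pull back the trivial module of $FG$, on which every group element acts as $1$.

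The direct route is just as short. Let $\ov g$ act on $F$ by multiplication by $t(g)$, and extend linearly. By the displayed identity this action respects the multiplication of $F^\alpha G$, and $\ov1$ acts as $1$. So it defines a one-dimensional left module, and by commutativity of $F$ also a right module.

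There is no real obstacle here. The one point to check is that the scalars $\lambda(g)$ are nonzero, and this is exactly where invertibility of $\ov g$ from \eqref{eq:coef} is used.
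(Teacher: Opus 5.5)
Your proposal is correct and follows essentially the paper's own argument. In the ``only if'' direction you write $\ov g\,v=\lambda(g)v$ with $\lambda(g)\neq0$ because $\ov g$ is a unit, and comparing the actions of $\ov g\,\ov h$ and $\alpha(g,h)\ov{gh}$ shows that $\lambda$ trivializes $\alpha$. In the ``if'' direction you pull back the trivial module along $F^\alpha G\cong FG$, and your direct construction (letting $\ov g$ act by $t(g)$) is the same thing written out. You also make explicit two points the paper leaves implicit: that $t(1)=1$ follows from $\alpha(1,1)=1$, and that the right-module case works the same way.
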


\begin{proof}   

    The if part is trivial as $\alpha$ is a coboundary implies that $F^\alpha G \simeq FG$ and we can take $M$ the trivial module. For the other direction, w.l.o.g. let assume that $M$ is a left module, and let $g\in G$. As we have that $\dim_FM=1$, then $M=Fv$ for some $v\in M$. In addition, $M$ is a left $F^\alpha G$-module, so $\overline{g}M=M$. Therefore we have that $\overline{g}v = \lambda(g)v$ for some $\lambda(g) \in F$. Note that $\lambda(g)\not=0$ as $\overline{g}$ is a unit, and $\lambda(1)=1$.

    We have that $\overline{g}\overline{h} = \alpha(g,h)\,\ov{gh}$, so, if we look at it action  over $v$ we obtain $\lambda(g)\lambda(h)=\alpha(g,h)\lambda(gh)$, that is, $\alpha(g,h)=\lambda(g)\lambda(h)\lambda(gh)^{-1}$ and therefore it is a coboundary. \qed
\end{proof}

Finally, we have characterised the ideals of dimension one over $F^\alpha G$, which are precisely the monomially equivalent to the repetition code. This is an analogue of \cite[Corollary 1.6]{GGMMM} over twisted group algebras.

\begin{proposition}\label{prop:dim1}
Let $I=Fv$ be a one-dimensional left ideal of $F^\alpha G$. Then $\alpha$ is a coboundary, all the coordinates of $v$ are nonzero, and $I$ is monomially equivalent to the repetition code of length $|G|$, which is an ideal of $FH$ for every group $H$ of order $|G|$.
\end{proposition}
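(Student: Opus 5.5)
The plan is to read off the one-dimensional structure from Lemma \ref{lem:onedim}, normalize by Proposition \ref{prop:graded}, and identify the resulting ideal of $FG$.

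Since $I=Fv$ is a left ideal, it is a one-dimensional left $F^\alpha G$-module. Lemma \ref{lem:onedim} then shows that $\alpha$ is a coboundary, and its proof gives more: there is a map $\lambda\colon G\to F^*$ with $\lambda(1)=1$, $\ov g\,v=\lambda(g)v$ for all $g\in G$, and $\alpha(g,h)=\lambda(g)\lambda(h)\lambda(gh)^{-1}$.

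To see that every coordinate of $v$ is nonzero, compare coefficients of $\ov1$ in $\ov{g^{-1}}\,v=\lambda(g^{-1})v$. By \eqref{eq:coef} the left-hand side has $\ov1$-coefficient $\alpha(g^{-1},g)\,v_g$, while the right-hand side has $\lambda(g^{-1})v_1$. Hence
\[
v_g=\alpha(g^{-1},g)^{-1}\lambda(g^{-1})\,v_1 \quad\text{for all } g\in G.
\]
Because $v\neq 0$, some $v_g\neq0$, which forces $v_1\neq0$. The displayed formula then gives $v_g\neq0$ for every $g$. Since all coordinates of $v$ are nonzero, the code $Fv$ is monomially equivalent to $F(1,\dots,1)$: rescale coordinate $g$ by $v_g^{-1}$ and use the identity bijection. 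That is the repetition code of length $|G|$.

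It remains to note that the repetition code is an ideal of $FH$ for every group $H$ with $|H|=|G|$. It is spanned by $\widehat H=\sum_{h\in H}h$, and $k\widehat H=\widehat H k=\widehat H$ for all $k\in H$, so $F\widehat H$ is a two-sided ideal of $FH$. Identifying $H$ with $G$ by any bijection gives the claimed equivalence.

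The only step needing care is the nonvanishing of coordinates, which rests on the coefficient identity \eqref{eq:coef}; the rest is immediate. Alternatively, one could first apply $\Phi_t$ with $t=\lambda$ from Proposition \ref{prop:graded} to reduce to $FG$ and cite \cite[Corollary 1.6]{GGMMM}, but the direct argument above avoids that.
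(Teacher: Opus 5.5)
Your proof is correct and follows the paper's argument. Lemma \ref{lem:onedim} supplies $\lambda$, and comparing coefficients via \eqref{eq:coef} gives $v_g=\lambda(g)^{-1}v_1$; your formula is the same, since $\lambda(g^{-1})\lambda(g)=\alpha(g^{-1},g)$. The only cosmetic difference is in the final rescaling: the paper uses the algebra isomorphism $\Phi_\lambda\colon F^\alpha G\to FG$ of Proposition \ref{prop:graded}, which agrees with your coordinatewise rescaling by $v_g^{-1}=\lambda(g)v_1^{-1}$ up to the global scalar $v_1$.
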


\begin{proof}
    First of all, we know by Lemma \ref{lem:onedim} that $\alpha$ is a coboundary and $\ov g\,v=\lambda(g)v$ with $\lambda(g)\in F^*$. In addition, by \eqref{eq:coef}, the coefficient of $\ov g$ in $\ov g\,v$ is $v_1$, and hence $v=v_1\sum_g\lambda(g)^{-1}\ov g$ with $v_1\ne0$. 

    Let $\Phi_\lambda: F^\alpha G \longrightarrow FG$ given by $\overline{g} \longmapsto \lambda(g)\overline{g}$. By Proposition \ref{prop:graded}, $\Phi_\lambda$ is an isomorphism of algebras and $I$ is monomially equivalent to $\Phi_\lambda(v) =v_1\sum_g\lambda(g)\lambda(g)^{-1}\ov g = v_1 \sum_g \overline{g}$. As a result, $I$ is monomially equivalent to the repetition code of $|G|$. 
    
    Finally, the repetition code is present in $FH$ for all $|H|=|G|$ as it corresponds to the ideal of $FH$ spanned by $\sum_{h\in H}h$. \qed
\end{proof}

\begin{remark}\label{rem:perm}
Note that in Proposition \ref{prop:dim1} we can not replace monomial equivalence by permutation equivalence. For example, let $F=\F_5$, $G=\langle g\rangle\cong C_2$ and let $\alpha$ be the cocycle given by $\ov g^{\,2}=4\cdot\ov1$, that is, $\alpha(g,g)=4$.

First of all, we compute the one-dimensional ideals of $F^\alpha G$. By the proof of Proposition \ref{prop:dim1}, we have that $\lambda(g)^2=\alpha(g,g)\lambda(1)=4$, so $\lambda(g)=2$ or $\lambda(g)=3$. Since $2^{-1}=3$ in $\F_5$, the one-dimensional ideals of $F^\alpha G$ are spanned by $1+3\ov g$ and $1+2\ov g$. On the other hand, the only group of order $2$ is $C_2$, and in $\F_5C_2$ we have that $\lambda(g)^2=1$, so $\lambda(g)=\pm1$ and the one-dimensional ideals are spanned by $1+\ov g$ and $1+4\ov g$.

Now, the only nontrivial permutation of the coordinates interchanges the coefficients of $\ov 1$ and $\ov g$. Therefore it transforms $1+2\ov g$ into $2+\ov g=2(1+3\ov g)$, and hence it interchanges the codes $\langle1+2\ov g\rangle$ and $\langle1+3\ov g\rangle$. As a result, a permutation maps each of these two codes onto one of them, and neither is spanned by $1+\ov g$ or $1+4\ov g$. Finally, we conclude that neither $\langle1+2\ov g\rangle$ nor $\langle1+3\ov g\rangle$ is permutation equivalent to a group code.
\end{remark}

\section{Twisted trace codes}\label{sec:trace}
In this section we study the twisted group codes of dimension at most $3$. We prove that each of them is monomially equivalent either to an abelian group code or to an irreducible constacyclic code. In the second case we prove that we can enlarge the base field making the code exactly a twisted trace code.

For the rest of the section, let $L\supseteq F$ be a field with $[L:F]=m$ for some $m\in \mathbb{N}$ and let $\rho\colon G\to L^*$ be an $\alpha$-character. We will prove that in such field $L$ all twisted group codes of dimension $2$ and $3$ are precisely the twisted trace code.

We will denote by $F[\rho(G)]$ the $F$-span generated by the elements  of $\rho(G)$. It is easy to see that  $\rho(1)=1$, and for all $g,h\in G$, we have that $ \rho(g)\rho(h)=\alpha(g,h)\rho(gh)\in F^*\rho(gh).$ Therefore, $F[\rho(G)]$ is closed under multiplication and contains the identity $1=\rho(1)$, so it is a subring of $L$. Moreover, $L$ is a finite field, all its subring are fields, and in particular $F[\rho(G)]$ is field. 

W.l.o.g we can assume that $L$ is exactly $F[\rho(G)]$. Now we can introduce the twisted trace code.

\begin{definition}
    Let $L=F[\rho(G)]$, then let $t_\rho(a)=\sum_{g\in G}\Tr_{L/F}\big(a\,\rho(g)^{-1}\big)\,\ov g$. Then the twisted trace code, denoted by $T_\rho$, is defined as 
    \[
T_\rho=\{t_\rho(a):a\in L\} = \Big\{\sum_{g\in G}\Tr_{L/F}\big(a\,\rho(g)^{-1}\big)\,\ov g:a\in L\Big\}.
\]
\end{definition}

Note that the trace $\Tr_{L/F}$ takes values in
$F$, so every coefficient of $t_\rho(a)$ lies in $F$, and therefore
$t_\rho(a)\in F^\alpha G$. Moreover, the trace is a $F$-linear map, so $t_\rho\colon L\to F^\alpha G$ is also
$F$-linear and therefore $T_\rho$ is a linear code of length $|G|$ over $F$. Now we will prove that it is a twisted group code, that is, it is two-sided ideal of $F^\alpha G$.

\begin{lemma}\label{lem:tro}
Let $a\in L$ and let $h\in G$. Then $
\ov h\,t_\rho(a)=t_\rho(a)\,\ov h=t_\rho\big(a\rho(h)\big) $
\end{lemma}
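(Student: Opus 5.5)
Both identities follow from a direct coefficient comparison, using \eqref{eq:coef} together with the $\alpha$-character relation $\rho(g)\rho(h)=\alpha(g,h)\rho(gh)$ and the $F$-linearity of $\Tr_{L/F}$.

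\textbf{Left multiplication.} By \eqref{eq:coef}, the coefficient of $\ov x$ in $\ov h\,t_\rho(a)$ is
\[
\alpha(h,h^{-1}x)\,\Tr_{L/F}\big(a\,\rho(h^{-1}x)^{-1}\big).
\]
Since $\alpha(h,h^{-1}x)\in F$, it can be moved inside the trace. Apply the character relation with $g=h$ and the second argument $h^{-1}x$:
\[
\rho(h)\rho(h^{-1}x)=\alpha(h,h^{-1}x)\,\rho(x),
\]
which gives $\alpha(h,h^{-1}x)\,\rho(h^{-1}x)^{-1}=\rho(h)\rho(x)^{-1}$. The coefficient is therefore $\Tr_{L/F}\big(a\rho(h)\,\rho(x)^{-1}\big)$. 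This is exactly the coefficient of $\ov x$ in $t_\rho(a\rho(h))$.

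\textbf{Right multiplication.} First derive the analogue of \eqref{eq:coef}. Writing $y=\sum_g y_g\ov g$, we have
\[
y\,\ov h=\sum_g y_g\,\alpha(g,h)\,\ov{gh}.
\]
Substituting $x=gh$, the coefficient of $\ov x$ is $\alpha(xh^{-1},h)\,y_{xh^{-1}}$. For $y=t_\rho(a)$ this coefficient is $\Tr_{L/F}\big(a\,\alpha(xh^{-1},h)\rho(xh^{-1})^{-1}\big)$. The character relation gives
\[
\rho(xh^{-1})\rho(h)=\alpha(xh^{-1},h)\,\rho(x),
\]
so $\alpha(xh^{-1},h)\,\rho(xh^{-1})^{-1}=\rho(h)\rho(x)^{-1}$. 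Hence the coefficient is again $\Tr_{L/F}\big(a\rho(h)\rho(x)^{-1}\big)$, and the two products agree.

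The commutativity of $L$ is used implicitly when the factors are rearranged, and no step is a genuine obstacle. The only care needed is to do the reindexing correctly and to take the character relation in the right order of arguments. It is not necessary to invoke the symmetry $\alpha(g,g^{-1})=\alpha(g^{-1},g)$.
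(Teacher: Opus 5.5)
Your proof is correct and follows the paper's argument: you compute the coefficient of $\ov x$ via \eqref{eq:coef}, absorb $\alpha(h,h^{-1}x)\in F$ into the trace, and apply the $\alpha$-character relation $\rho(h)\rho(h^{-1}x)=\alpha(h,h^{-1}x)\rho(x)$. The only difference is that you write out the right-multiplication case explicitly, via $\rho(xh^{-1})\rho(h)=\alpha(xh^{-1},h)\rho(x)$, which the paper dismisses as analogous.
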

\begin{proof}
Let $a\in L$ and $h,w\in G$. We know by equation \eqref{eq:coef} that the coefficient of $\ov w$ in
$\ov h\,t_\rho(a)$ is given by 
\[(\ov h\,t_\rho(a))_w = \alpha(h,h^{-1}w)\,(\,t_\rho(a))_{h^{-1}w} =\alpha(h,h^{-1}w)\Tr_{L/F}\big(a\rho(h^{-1}w)^{-1}\big)\]

Note that $\rho(h)\rho(h^{-1}w)=\alpha(h,h^{-1}w)\rho(w)$, and therefore we have that 
\begin{align*}
    (\ov h\,t_\rho(a))_w & = \alpha(h,h^{-1}w)\Tr_{L/F}\big(a\rho(h^{-1}w)^{-1}\big) \\
    & = \alpha(h,h^{-1}w)\Tr_{L/F}\big(a\alpha(h,h^{-1}w)^{-1}\rho(h)\rho(w)^{-1}\big) \\
    & =\Tr_{L/F}\big(a\rho(h)\rho(w)^{-1}\big) \\
    & = (t_\rho(a\rho(h)))_w.
\end{align*}
The identity for the other side is proven analogously.
\end{proof}

We can use this lemma to prove that the twisted trace code is precisely a code, that is, a two-sided ideal of $F^\alpha G$.

\begin{lemma}\label{lem:trace}
$T_\rho$ is a two-sided ideal of $F^\alpha G$ of dimension $m$. Moreover, as a left $F^\alpha G$-module, $T_\rho$ is isomorphic to $L$, on which $\ov g$ acts as multiplication by $\rho(g)$. In particular $T_\rho$ is a minimal left ideal of $F^\alpha G$, and its endomorphism ring is isomorphic to $L$.
\end{lemma}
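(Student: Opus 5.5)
The plan is to read off the ideal property and the module structure from Lemma \ref{lem:tro}, and to get the dimension and minimality from injectivity of $t_\rho$ together with $L=F[\rho(G)]$.

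First, $T_\rho$ is a two-sided ideal. By Lemma \ref{lem:tro}, $\ov h\,t_\rho(a)=t_\rho(a\rho(h))\in T_\rho$ and $t_\rho(a)\,\ov h=t_\rho(a\rho(h))\in T_\rho$ for all $h\in G$. Since the $\ov h$ span $F^\alpha G$ and $T_\rho$ is an $F$-subspace, $T_\rho$ is closed under multiplication on both sides.

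Second, $\dim_F T_\rho=m$, which amounts to showing that the $F$-linear map $t_\rho\colon L\to F^\alpha G$ is injective. Suppose $t_\rho(a)=0$. Then $\Tr_{L/F}(a\rho(g)^{-1})=0$ for every $g$. Because $\rho(g)^{-1}=\alpha(g,g^{-1})^{-1}\rho(g^{-1})$ is an $F^*$-multiple of $\rho(g^{-1})$, the elements $\rho(g)^{-1}$ also span $L=F[\rho(G)]$ over $F$. Hence $\Tr_{L/F}(ab)=0$ for all $b\in L$. The trace form of a finite (separable) extension is nondegenerate, so $a=0$. This step is the main point of the proof; the remaining steps are formal.

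Third, the module structure. The map $t_\rho\colon L\to T_\rho$ is now an $F$-linear bijection, and by Lemma \ref{lem:tro} it intertwines multiplication by $\rho(h)$ on $L$ with left multiplication by $\ov h$ on $T_\rho$. So $L$ becomes a left $F^\alpha G$-module via $\ov g\cdot x=\rho(g)x$; this is an action because $\rho(g)\rho(h)=\alpha(g,h)\rho(gh)$. With this action, $t_\rho$ is a module isomorphism.

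Fourth, minimality. A left submodule of $L$ is an $F$-subspace stable under multiplication by every $\rho(g)$. It is therefore stable under multiplication by $F[\rho(G)]=L$, so it is an $L$-subspace of the one-dimensional $L$-space $L$, i.e.\ $0$ or $L$. Hence $T_\rho$ is a minimal left ideal.

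Finally, the endomorphism ring. An $F^\alpha G$-endomorphism $\varphi$ of $L$ commutes with multiplication by each $\rho(g)$, hence with multiplication by all of $L$. So $\varphi$ is $L$-linear on the one-dimensional $L$-space $L$, which means it is multiplication by $\varphi(1)\in L$. Conversely, every multiplication map commutes with the action because $L$ is commutative. Therefore $\End_{F^\alpha G}(T_\rho)\cong L$.
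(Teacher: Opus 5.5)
Your proof is correct and follows essentially the same route as the paper: the ideal property comes from Lemma \ref{lem:tro}, injectivity of $t_\rho$ (hence $\dim_F T_\rho=m$) comes from nondegeneracy of the trace form, and minimality and $\End(T_\rho)\cong L$ come from $L=F[\rho(G)]$. The only cosmetic difference is that you use directly that the elements $\rho(g)^{-1}$ span $L$, whereas the paper first rewrites the vanishing condition as $\Tr_{L/F}(a\rho(g))=0$ and then uses that $\rho(G)$ spans $L$.
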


\begin{proof}
Recall that $t_\rho$ is $F$-linear, so $T_\rho$ is an $F$-subspace of
$F^\alpha G$. By the identity of lemma \ref{lem:tro}, $\ov h\,T_\rho\subseteq T_\rho$ and
$T_\rho\,\ov h\subseteq T_\rho$ for every $h\in G$. As the elements $\ov h$ generate
$F^\alpha G$, it follows that $T_\rho$ is a two-sided ideal of $F^\alpha G$.

Now, we claim that $t_\rho$ is a linear isomorphism from $L$ to $T_\rho$, so they are isomorphic as $F$-vector spaces. As $t_\rho$ is onto $T_\rho$, we just need to prove that $t_\rho$ is injective.  

Let $a\in L$ with $t_\rho(a)=0$, that is,
$\Tr_{L/F}\big(a\rho(g)^{-1}\big)=0$ for all $g\in G$. Note that
$\rho(g)^{-1}=\alpha(g^{-1},g)^{-1}\rho(g^{-1})$, and hence
$\Tr_{L/F}\big(a\rho(g)\big)=\Tr_{L/F}\big(\alpha(g,g^{-1})a\rho(g^{-1})^{-1}\big)=0$ for all $g\in G$. Since $L=F[\rho(G)]$ is the $F$-span
of $\rho(G)$, every $y\in L$ can be written as $y=\sum_{g}b_g\,\rho(g)$ with
$b_g\in F$, and then
\[
\Tr_{L/F}(ay)=\sum_{g\in G} b_g\,\Tr_{L/F}\big(a\rho(g)\big)=0.
\]
However, the trace form is nondegenerate \cite[Section 2.3]{LN}, so $\Tr_{L/F}(ay)=0$ for all
$y\in L$ implies that $a=0$, which proves the claim. Therefore $t_\rho$ is a linear 
isomorphism from $L$ onto $T_\rho$, and $\dim_F T_\rho=[L:F]=m$.

Now consider $L$ as a left $F^\alpha G$-module where $\ov g$ acts as multiplication by $\rho(g)$. Note that it is indeed a module, since $\rho$ is an $\alpha$-character. By the identity above, we have that $\ov h t_\rho(a)=t_\rho\big(a\rho(h)\big)= t_\rho\big(\rho(h)a\big) = t_\rho\big(\ov h \cdot a\big)$, so $t_\rho$ respect the action of $G$ over $L$ and therefore it is an isomorphism of left $F^\alpha G$-modules from $L$ onto $T_\rho$. In addition, the submodules of $L$ are the $F$-subspaces closed under multiplication by every $\rho(g)$, and as $L=F[\rho(G)]$, the only possibility are $0$ and $L$. Therefore $T_\rho$ is a minimal left ideal of $F^\alpha G$. 

Finally, the module endomorphisms of $L$ are the $F$-linear maps commuting with the multiplication by every $\rho(g)$, that is, the $L$-linear maps $L\to L$. Those are precisely the multiplications by elements of $L$ and as a result, the endomorphism ring of $T_\rho$ is isomorphic to $L$. \qed
\end{proof}

\begin{example}
    Observe that if
$\alpha=1$ and $G=\langle g\rangle$ is cyclic of order $n$, then $\rho$ is a group
homomorphism $G\to L^*$, so $\rho(g^i)=\zeta^i$ with $\zeta=\rho(g)$ an $n$-th root of
unity. In this case
\[
T_\rho=\Big\{\big(\Tr_{L/F}(a\,\zeta^{-i})\big)_{0\le i<n}:a\in L\Big\},
\]
which is precisely the usual trace description of an irreducible cyclic code
(see \cite[Chapter 8]{MS}).
\end{example}

The following result shows that, up to monomial equivalence, every twisted trace code is a constacyclic code.

\begin{proposition}\label{prop:cyclic}
Let $n=|G|$. Then there exist a cocycle $\alpha'$ of the cyclic group $C_n$ and an $\alpha'$-character $\rho'\colon C_n\to L^*$ such that $T_\rho$ is monomially equivalent to the twisted trace code $T_{\rho'}$. As a result, $T_\rho$ is monomially equivalent to an irreducible constacyclic code of length $n$ and dimension $m$.
\end{proposition}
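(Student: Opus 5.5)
Since $L=F[\rho(G)]$ is a finite field, $L^*$ is cyclic, and the idea is to replace $G$ by a cyclic group of the same order whose $\rho$-image has the same projective shape. The key observation is that the coordinates of $t_\rho(a)$ depend on $g$ only through the value $\rho(g)^{-1}$, and rescaling $\rho(g)$ by a scalar in $F^*$ rescales the coordinate by a nonzero element of $F$, because $\Tr_{L/F}$ is $F$-linear. So $T_\rho$ is determined up to monomial equivalence by the multiset of classes $\rho(g)F^*\in L^*/F^*$, counted with multiplicity. Consider the composite $\bar\rho\colon G\to L^*/F^*$. Since $\rho(g)\rho(h)\in F^*\rho(gh)$, this map is a group homomorphism. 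Its image $K$ is a subgroup of the cyclic group $L^*/F^*$, hence cyclic, say of order $d$ generated by $\zeta F^*$ with $\zeta\in L^*$. Every fibre of $\bar\rho$ is a coset of the kernel, so each class in $K$ is hit exactly $n/d$ times, where $n=|G|$.

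Now build the cyclic model. Let $C_n=\langle x\rangle$ and define $\rho'(x^i)=\zeta^{i}$ for $0\le i<n$. Then $\rho'(x^i)\rho'(x^j)=\zeta^{i+j}$ and $\rho'(x^{i+j \bmod n})$ differ by $1$ or by $\zeta^{n}$. Since $d\mid n$ and $\zeta^d\in F^*$, we have $c:=\zeta^n\in F^*$. So $\rho'$ is an $\alpha'$-character for exactly the cocycle $\alpha'$ of Example~\ref{ex:consta} with this $c$. Moreover $F[\rho'(C_n)]=F[\zeta]$. I need to check that this equals $L$. It holds because $F[\rho(G)]$ is spanned by representatives of the classes in $K$, which are $F^*$-multiples of powers of $\zeta$, so $F[\zeta]=L$. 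Under $\bar\rho'$, each class $\zeta^iF^*$ with $0\le i<d$ is also hit exactly $n/d$ times, namely by $x^{i},x^{i+d},\dots$. Hence there is a bijection $\pi\colon C_n\to G$ and scalars $\mu(y)\in F^*$ with $\rho(\pi(y))=\mu(y)\rho'(y)$. This gives
\[
\Tr_{L/F}\big(a\rho(\pi(y))^{-1}\big)=\mu(y)^{-1}\Tr_{L/F}\big(a\rho'(y)^{-1}\big),
\]
so $t_{\rho'}(a)$ is obtained from $t_\rho(a)$ by the monomial transformation $(\pi,\mu^{-1})$ for every $a$. Therefore $T_\rho$ and $T_{\rho'}$ are monomially equivalent.

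It remains to transport $T_{\rho'}$ to a constacyclic code. By Example~\ref{ex:consta}, $F^{\alpha'}C_n\cong F[x]/(x^n-c)$ via $x^i\mapsto\ov{x^i}$, and this is a coordinate identification, not just an abstract isomorphism. By Lemma~\ref{lem:trace}, $T_{\rho'}$ is a minimal ideal of dimension $[L:F]=m$, so it is an irreducible constacyclic code of length $n$ and dimension $m$. The one step needing care is the fibre-counting argument that produces the bijection $\pi$ matching classes with equal multiplicities; that is exactly where the homomorphism property of $\bar\rho$ is used.
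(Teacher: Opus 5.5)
Your proof is correct and follows the paper's argument: the homomorphism $\bar\rho\colon G\to L^*/F^*$ has cyclic image, a fibre count gives a bijection between $G$ and $C_n$ that matches classes, and the coordinates are then rescaled by the elements of $F^*$ relating $\rho(g)$ to its partner value of $\rho'$. The only difference is that you fix the representatives as $\rho'(x^i)=\zeta^i$, so $\alpha'$ is literally the cocycle of Example~\ref{ex:consta} with $c=\zeta^n$. The paper instead takes arbitrary representatives and then invokes the fact that every cocycle of a cyclic group is cohomologous to one of that form, so your choice makes the final identification with $F[x]/(x^n-c)$ immediate.
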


\begin{proof}
The map $\bar\rho\colon G\to L^*/F^*$, $g\mapsto\rho(g)F^*$ is a group homomorphism, as $\bar\rho(g) \bar\rho(h) = \alpha(g,h) \bar\rho(gh) = \bar\rho(gh) F^*$. Let us denote $Z=Im(\bar\rho)$. We know that $L^*$ is a cyclic group, so $Z\leq L^* / F^*$ will also be a cyclic group. In addition, $|Z|$ divides $|G| =n$, so there exists an epimorphism $\pi\colon C_n\to Z$. All the fibres of $\bar\rho$ and $\pi$ have $n/|Z|$ elements, so there is a bijection $\varphi\colon G\to C_n$ with $\pi(\varphi(g))=\bar\rho(g)$ for all $g\in G$.
    
 For each $x\in C_n$, choose $\rho'(x)\in L^*$ a representative of the class $\pi(x)\in Z$, with $\rho'(1)=1$ and let $\alpha'(x,y)=\rho'(x)\rho'(y)\rho'(xy)^{-1}$. Note that $\rho'(x)\rho'(y)F^*= \rho'(xy)F^*$, so we have that $\alpha'(x,y)F^* = \rho'(x)\rho'(y)\rho'(xy)^{-1} F^* = F^*$, and therefore it lies in $F^*$, and it is a $2$-cocycle as it is the coboundary of $\rho'$ in $L^*$. Moreover, $F^*\rho(g) =\bar\rho(g) = \pi ( \varphi(g))= F^* \rho'(\varphi(g))$, so $F[\rho'(C_n)]=F[\rho(G)]=L$. 

Finally,  $\rho'(\varphi(g))F^*=\pi(\varphi(g)) =\overline{\rho}(g)=\rho(g)F^*$, so there exists $c_g\in F^*$ such that $\rho(g)=c_g\,\rho'(\varphi(g))$, from where
\[
\Tr\big(a\,\rho(g)^{-1}\big)=c_g^{-1}\,\Tr\big(a\,\rho'(\varphi(g))^{-1}\big)\qquad(a\in L,\ g\in G),
\]
And therefore $T_\rho$ is obtained from $T_{\rho'}$ through the bijection $\varphi$ and the scalars $c_g^{-1}$, so they are monomially equivalent codes. For the last assertion, note that $F^{\alpha'}C_n$ is a commutative algebra because the cocycle $\alpha'$ is symmetric, and $T_{\rho'}$ is a minimal ideal of $F^{\alpha'}C_n$ by Lemma \ref{lem:trace}. Finally, by Example \ref{ex:consta}, there is an isomorphism $F[x]/(x^n-c)\to F^{\alpha'}C_n$ for some $c\in F^*$ which only rescales the basis, and we conclude that $T_\rho$ is monomially equivalent to an irreducible constacyclic code of length $n$ and dimension $m$. \qed
\end{proof}

We can now prove our main result.

\begin{theorem} \label{thm:maindim2and3}
Let $I$ be a nonzero ideal of $F^\alpha G$ with $\dim_FI\le3$.
\begin{enumerate}[(a)]
\item If $\alpha$ is a coboundary, then $I$ is monomially equivalent to an abelian group code.
\item If $\alpha$ is not a coboundary, then $m=\dim_FI\in\{2,3\}$, and there are a field $L\supseteq F$ with $[L:F]=m$ and an $\alpha$-character $\rho\colon G\to L^*$ such that
\[
I=\Big\{\sum_{g\in G}\Tr_{L/F}\big(a\,\rho(g)^{-1}\big)\,\ov g\ :\ a\in L\Big\}.
\]
Moreover, $I$ is monomially equivalent to an irreducible constacyclic code of length $|G|$, that is, to a minimal ideal of $F[x]/(x^{|G|}-c)$ for some $c\in F^*$.
\end{enumerate}
\end{theorem}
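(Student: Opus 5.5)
Part (a) is immediate from the earlier results. If $\alpha$ is a coboundary, Proposition \ref{prop:graded} gives a rescaling $\Phi_t\colon F^\alpha G\to FG$ that is an algebra isomorphism and sends two-sided ideals to monomially equivalent two-sided ideals. So $I$ is monomially equivalent to a group code of $FG$ of dimension at most $3$. By \cite{BdRS} and \cite{GGMMM}, that code is permutation equivalent to an abelian group code, and composing the two equivalences proves (a).

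For (b), assume $\alpha$ is not a coboundary. By Lemma \ref{lem:onedim}, $F^\alpha G$ has no one-dimensional left module, so $\dim_F I\ne 1$. Choose a minimal left ideal $J\subseteq I$. Then $J$ is a simple left module of dimension $2$ or $3$, since it cannot have dimension $1$. Let $D=\End_{F^\alpha G}(J)$, a finite division ring and hence a field. I will show $D$ acts as a splitting field. Consider the image $A$ of $F^\alpha G$ in $\End_F(J)$; it is simple, namely $M_s(D)$ acting on $D^s$, with $\dim_F J=s\,[D:F]\in\{2,3\}$. If $s=1$, then $A=D=:L$ is commutative of degree $m=\dim J$. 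Each $\ov g$ acts as an element $\rho(g)\in L^*$, and the relation $\ov g\,\ov h=\alpha(g,h)\ov{gh}$ makes $\rho$ an $\alpha$-character with $F[\rho(G)]=L$, because $A$ is spanned by the images of the $\ov g$. If $s\ge2$, then $s=\dim J$ and $D=F$, so $A\cong M_s(F)$. This is the main obstacle. The case is excluded when $I$ is a two-sided ideal: $I$ is a sum of simple components of the semisimple quotient of $F^\alpha G$ by its radical. If $F^\alpha G$ is not semisimple (characteristic divides $|G|$), I would still pass to the block containing $I$. The simplest route is to note that a two-sided ideal $I$ containing the left ideal $J$ also contains $JF^\alpha G$. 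Moreover, $I$ acts on $J$ through $A$, and the two-sided ideal $A\cong M_s(F)$ has $F$-dimension $s^2\ge4>3$ unless $I$ annihilates $J$. I would argue that $I\cdot J\neq 0$, or else run the argument with the annihilator, to force $\dim I\ge s^2$, a contradiction. So $s=1$.

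Next I identify $J$ with the trace code. The action of $\ov{g^{-1}}$ and the identity \eqref{eq:coef} show that the coordinate functional $y\mapsto y_1$ is nonzero on $J$, since a nonzero $y$ has some $y_g\ne0$ and then $\ov{g^{-1}}y$ has nonzero coefficient at $\ov1$. The functional $\ell\colon L\cong J\to F$, $y\mapsto y_1$, is $F$-linear, so it has the form $\ell(x)=\Tr_{L/F}(bx)$ for some $b\ne0$. For $y\in J$ corresponding to $x\in L$, we have $y_g=\alpha(g^{-1},g)^{-1}(\ov{g^{-1}}y)_1$. This equals $\alpha(g^{-1},g)^{-1}\Tr(b\rho(g^{-1})x)=\Tr(b x\rho(g)^{-1})$, using $\rho(g^{-1})=\alpha(g^{-1},g)\rho(g)^{-1}$. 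Hence $J=T_\rho$ after substituting $a=bx$.

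Finally, $\dim I\le3$ and $J\subseteq I$. If $I\ne J$, the quotient $I/J$ is a module of dimension at most $1$, so it is zero by Lemma \ref{lem:onedim}, and therefore $I=J$. Thus $m=\dim_F I\in\{2,3\}$, $[L:F]=m$, and $I=T_\rho$. Proposition \ref{prop:cyclic} then gives the monomial equivalence to an irreducible constacyclic code.
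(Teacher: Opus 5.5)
There is a genuine gap, and it is at the step you flag yourself: ruling out $s\ge2$, i.e.\ $D=F$ and $A\cong M_s(F)$. Your treatment of (a), the case $s=1$, the trace computation of the coordinates and the final step $I=J$ are all fine, and in substance they match the paper.

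For $s\ge2$, you note that the image of $I$ in $A$ is a two-sided ideal of the simple algebra $A$. That correctly handles the case where the image is all of $A$, since then $\dim_F I\ge s^2\ge4$. But the case $I\cdot J=0$ is left open, and ``argue that $I\cdot J\ne0$, or else run the argument with the annihilator'' is not an argument. General principles will not settle it, because $F^\alpha G$ need not be semisimple. Once $I=J$, the condition $I\cdot J=0$ just says $I^2=0$. Two-sided ideals with square zero do occur in modular characteristic: $F\sum_g g$ in $FC_p$, with $p$ the characteristic of $F$, is one. Passing to blocks does not help either. So as written, the exclusion of $D=F$ is unproved.

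The missing idea is to use the right action, which is available precisely because $I$ is two-sided. Run your closing dimension argument first to get $I=J$, so that $I$ itself is a simple left module and $D=\End_{F^\alpha G}(I)$. For any $x\in F^\alpha G$, right multiplication by $x$ maps $I$ into $I$ and commutes with left multiplication, so it lies in $D$. If $D=F$, every $x$ acts on $I$ from the right as a scalar $\lambda(x)$. Then every one-dimensional subspace of $I$ is a one-dimensional right $F^\alpha G$-module, contradicting Lemma \ref{lem:onedim}, which covers right modules. Equivalently, $\lambda(\ov g)\lambda(\ov h)=\alpha(g,h)\lambda(\ov{gh})$ exhibits $\alpha$ as a coboundary.

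Hence $[D:F]\ge2$, which forces $s=1$ and $[D:F]=m$. This is exactly how the paper proceeds. With this step in place of your annihilator discussion, the rest of your proof goes through unchanged.
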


\begin{proof}
    (a) If $\alpha$ is a coboundary, Proposition \ref{prop:graded} gives a monomial equivalence between $I$ and an ideal of $FG$ of dimension at most $3$, which is permutation equivalent to an abelian group code by \cite[Theorem 1]{GGMMM}.

    (b) Suppose that  $\alpha$ is not a coboundary. Then by Lemma \ref{lem:onedim} there is no ideal of dimension $1$, so $m=\dim_FI\in\{2,3\}$. In addition, if $I$ was not simple as a left $F^\alpha G$-module, it would have a left submodule of dimension $1$, or $m=3$ and a left submodule $J$ of dimension $2$, and therefore $I/J$ would be a left $F^\alpha G$-module of dimension $1$. As a result, by the  lemma \ref{lem:onedim} it must be simple. By Schur's lemma \cite[Proposition XVII.1.1]{Lang} the endomorphism ring  $D=\End({}_{F^\alpha G}I)$ of $I$ is a division ring, and as it must be finite, it is a field by Wedderburn's theorem \cite[Theorem 3.1.1]{Herstein}, and $D$ contains $F$ as the scalar multiplications. Note that $F^\alpha G$ is also included in $D$ as right multiplication, as $I$ is an ideal. If $D$ were equal to $F$, then multiplication by an element of $F^\alpha G$ would be a scalar multiplication, and therefore every one-dimensional subspace of $I$ would be a right ideal of $F^\alpha G$, which is impossible by Lemma \ref{lem:onedim}. Therefore $[D:F]\ge2$, and since $\dim_FI=[D:F]\cdot\dim_DI\le3$, we get $\dim_DI=1$ and $[D:F]=m$.
    
    We have that $I$ is one-dimensional over the field $D$, so $I=Dx$ for all $0\not = x\in I$. Also, by definition, left multiplication by $\bar g$ commutes with the elements of $D$ for all $g\in G$, so it is a $D$-linear map of the one-dimensional $D$-vector space $I$,so there exists $\rho(g)\in D^*$ such that $\ov g\,x=\rho(g)x$ for all $x\in I$. With the same proof as in Lemma \ref{lem:onedim}, it is easy to check that $\rho$ is an $\alpha$-character. Reasoning as in the previous proof, the subfield $F[\rho(G)]$ of $D$ acts on $I$ and every $F[\rho(G)]$-subspace of $I$ is also a left ideal of $F^\alpha G$. As $I$ is simple, $\dim_{F[\rho(G)]}I=1$ and $F[\rho(G)]=D$.

    It remains to compute the coordinates of the elements of $I$ to prove that ${I=\Big\{\sum_{g\in G}\Tr_{L/F}\big(a\,\rho(g)^{-1}\big)\,\ov g\ :\ a\in L\Big\}}$. To do so, fix $0\ne x_0\in I$, so that $I=Dx_0$, and let $\varepsilon(y)=y_1$ be the coefficient of $\ov1$. The map $D\longrightarrow F$ given by $\delta\mapsto\varepsilon(\delta x_0)$ is $F$-linear, and $D$ is a finite field, so by \cite[Section 2.3]{LN} there exist $\theta\in D$ with $\varepsilon(\delta x_0)=\Tr_{D/F}(\theta\delta)$ for all $\delta\in D$. 

    Now let $y=\delta x_0 \in I$ and $g\in G$. We have $\ov{g^{-1}}\,y=\rho(g^{-1})\delta x_0$ and $\rho(g)\rho(g^{-1})=\alpha(g,g^{-1})$, so by \eqref{eq:coef}
\[
\alpha(g^{-1},g)\,y_g=\varepsilon\big(\ov{g^{-1}}\,y\big)=\varepsilon\big(\rho(g^{-1})\delta x_0\big)=\Tr_{D/F}\big(\theta\delta\rho(g^{-1})\big)=\alpha(g,g^{-1})\Tr_{D/F}\big(\theta\delta\rho(g)^{-1}\big).
\]
Using \eqref{eq:coef} again, we obtain $y_g=\Tr_{D/F}(\theta\delta\rho(g)^{-1})$ for all $g\in G$. In particular $\theta\ne0$ as $I\ne0$. As $\delta$ runs over $D$ so does $\theta\delta$, and therefore $I=T_\rho$ with $L=D$. The last assertion of (b) is Proposition \ref{prop:cyclic}. \qed
\end{proof}

\begin{corollary}\label{cor:noncomm}
Suppose that $g,h\in G$ commute but $\ov g\,\ov h\ne\ov h\,\ov g$ in $F^\alpha G$. Then $F^\alpha G$ has no ideals of dimension $1$, $2$ or $3$.
\end{corollary}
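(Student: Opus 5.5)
The argument combines Lemma \ref{lem:onedim} with Theorem \ref{thm:maindim2and3}(b). First we show that $\alpha$ is not a coboundary. If it were, Proposition \ref{prop:graded} would give an algebra isomorphism $\Phi_t\colon F^\alpha G\to FG$, $\ov g\mapsto t(g)\,\ov g$. In $FG$ the elements $g$ and $h$ commute, so $\Phi_t(\ov g\,\ov h)=\Phi_t(\ov h\,\ov g)$. Since $\Phi_t$ is injective, this forces $\ov g\,\ov h=\ov h\,\ov g$, contradicting the hypothesis. Hence $\alpha$ is not a coboundary, and Lemma \ref{lem:onedim} (or Proposition \ref{prop:dim1}) rules out ideals of dimension $1$.

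Next suppose $I$ is an ideal with $\dim_F I\in\{2,3\}$. By Theorem \ref{thm:maindim2and3}(b) there are a field $L$ with $[L:F]=\dim_FI$ and an $\alpha$-character $\rho\colon G\to L^*$ such that $I=T_\rho$. The key observation is that an $\alpha$-character into a commutative field forces $\alpha$ to be symmetric on commuting pairs. Indeed, $\rho(g)\rho(h)=\alpha(g,h)\rho(gh)$ and $\rho(h)\rho(g)=\alpha(h,g)\rho(hg)$. Since $gh=hg$ and $L$ is commutative, the left-hand sides agree, and cancelling the nonzero element $\rho(gh)$ gives $\alpha(g,h)=\alpha(h,g)$.

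On the other hand, $\ov g\,\ov h=\alpha(g,h)\,\ov{gh}$ and $\ov h\,\ov g=\alpha(h,g)\,\ov{hg}=\alpha(h,g)\,\ov{gh}$. So the hypothesis $\ov g\,\ov h\ne\ov h\,\ov g$ says exactly that $\alpha(g,h)\ne\alpha(h,g)$. This contradicts the previous paragraph, so no such $\rho$ exists and there is no ideal of dimension $2$ or $3$.

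Once Theorem \ref{thm:maindim2and3}(b) is available, no step is a real obstacle. The only delicate point is confirming that the character $\rho$ produced there takes values in a commutative field. This is part of that theorem's conclusion: the field is $D=\End(I)$, which is commutative by Wedderburn's theorem.
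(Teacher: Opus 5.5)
Your proof is correct and follows the paper's argument: you show that $\alpha$ is not a coboundary, rule out dimension $1$ by Lemma \ref{lem:onedim}, and then use the $\alpha$-character from Theorem \ref{thm:maindim2and3}(b) to force $\alpha(g,h)=\alpha(h,g)$. The only cosmetic difference is how you show $\alpha$ is not a coboundary: you transport commutativity through $\Phi_t$ into $FG$, whereas the paper simply notes that $t(g)t(h)t(gh)^{-1}$ is symmetric for commuting $g,h$.
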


\begin{proof}
    $\ov g\,\ov h\ne\ov h\,\ov g$ means in particular that $\alpha(g,h)\ne\alpha(h,g)$. This implies that $\alpha$ is not a coboundary in any field, as $t(g)t(h)t(gh)^{-1}$ is symmetric for all $t:G\longrightarrow L$ with $L$ a field. Therefore by Lemma \ref{lem:onedim} there is no ideal of dimension $1$. If there was an ideal of dimension $2$ or $3$, by Theorem \ref{thm:maindim2and3}(b) there would be an $\alpha$-character $\rho\colon G\to L^*$, and as in the previous case $\alpha(g,h)=\rho(g)\rho(h)\rho(gh)^{-1}$ is symmetric, which is a contradiction.
\end{proof}

\section{The tetracode}\label{sec:examples}

In this section we will prove that not all twisted group codes of dimension at most $3$ are monomially equivalent to an abelian group code. In fact, we will prove that there exist an infinity family of twisted group code that are not monomially equivalent to any group codes. We will provide such family for both codes in dimension $2$ and in dimension $3$.

For the rest of the section, let $k\in\{2,3\}$, and for $j\ge0$ let $N_j=(q^j-1)/(q-1)$, so that $N_0=0$, $N_1=1$, $N_2=q+1$ and $N_3=q^2+q+1$. Note that $N_j-N_{j-1}=q^{j-1}$ for $j\ge1$, and that $N_j$ is the number of element over the projective space $\mathbb P ^{j-1}(F)$.
 
To compare codes we will look at their coordinates as linear maps, that is, let $\mathcal C\subseteq F^X$ be a linear code. For $x\in X$ we denote by $\varepsilon_x\colon\mathcal C\to F$, $c\mapsto c_x$, the coordinate map at $x$, and by $\mathcal C^*$ it dual space, that is, the space of linear maps $\mathcal C\to F$. Note that have only define $\varepsilon_x$ on $\mathcal{C}$, so it could happen that $\varepsilon_x=0$ while in the general space $F^X$ that is not true in general.
 
The following lemma shows that all $\left[ r N_k, \, k, \, r q^{k-1} \right]_q$ codes are monomially equivalent.
 
\begin{lemma}\label{lem:simplex}
Let $r\ge1$ and let $\mathcal C\subseteq F^X$ be a linear code of length $n=rN_k$ and dimension $k$.
\begin{enumerate}[(a)]
\item The minimum distance of $\mathcal C$ is at most $rq^{k-1}$.
\item If the minimum distance of $\mathcal C$ is $rq^{k-1}$, then every nonzero codeword has weight $rq^{k-1}$, and for every nonzero $\varphi\in\mathcal C^*$ there are exactly $r$ elements $x\in X$ with $\varepsilon_x\in F^*\varphi$.
\item All the codes of length $rN_k$, dimension $k$ and minimum distance $rq^{k-1}$ are monomially equivalent.
\end{enumerate}
\end{lemma}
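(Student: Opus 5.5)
The natural tool is the classical averaging (Plotkin-type) count, phrased through the coordinate functionals $\varepsilon_x\in\mathcal C^*$. For a nonzero codeword $c$, its weight equals the number of $x\in X$ with $\varepsilon_x(c)\ne0$. I would sum the weights over all nonzero codewords $c\in\mathcal C$, exchanging the order of summation. For a fixed $x$, if $\varepsilon_x\ne0$ then $\varepsilon_x(c)\ne0$ for exactly $q^k-q^{k-1}$ codewords $c$, since the kernel is a hyperplane. If $\varepsilon_x=0$, no codeword is counted. Hence
\[
\sum_{c\ne0}\mathrm{wt}(c)=(q^k-q^{k-1})\,\big|\{x\in X:\varepsilon_x\ne0\}\big|\le (q^k-q^{k-1})\,rN_k = rq^{k-1}(q^k-1).
\]
Since there are $q^k-1$ nonzero codewords, the minimum weight is at most the average, which is at most $rq^{k-1}$. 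This proves (a).

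For (b), equality $d=rq^{k-1}$ forces equality throughout. Every nonzero codeword has weight exactly $rq^{k-1}$, and every $\varepsilon_x$ is nonzero. To get the multiplicity statement, I would count zeros on hyperplanes. Fix a nonzero $\varphi\in\mathcal C^*$ and let $H\subseteq\mathcal C$ be the $(k-1)$-dimensional subspace with $\varphi(H)=0$. Summing weights over the nonzero $c\in H$ gives $(q^{k-1}-1)\,rq^{k-1}$. On the other hand, a coordinate $x$ with $\varepsilon_x\in F^*\varphi$ contributes $0$, while any other $x$ has $\varepsilon_x|_H\ne0$ and contributes $q^{k-1}-q^{k-2}$. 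Writing $s$ for the number of $x$ with $\varepsilon_x\in F^*\varphi$, we get
\[
(rN_k-s)(q^{k-1}-q^{k-2})=(q^{k-1}-1)rq^{k-1}.
\]
Dividing by $q^{k-2}(q-1)$ and using $N_k-N_{k-1}=q^{k-1}$ gives $rN_k-s=rqN_{k-1}$. Since $N_k=qN_{k-1}+1$, this yields $s=r$. For $k=2$ the factor $q^{k-2}=1$, so there is no issue.

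For (c), part (b) says the multiset of points $\{F^*\varepsilon_x\}$ in the projective space $\mathbb P(\mathcal C^*)\cong\mathbb P^{k-1}(F)$ contains each point exactly $r$ times. Take two such codes $\mathcal C\subseteq F^X$ and $\mathcal C'\subseteq F^Y$ and fix a linear isomorphism $\psi\colon\mathcal C\to\mathcal C'$. The dual map $\psi^*\colon\mathcal C'^*\to\mathcal C^*$ induces a bijection of projective points. Since each point carries exactly $r$ coordinates in both codes, I can choose a bijection $\pi\colon Y\to X$ with $\psi^*(\varepsilon'_y)=\kappa_y^{-1}\varepsilon_{\pi(y)}$ for suitable $\kappa_y\in F^*$. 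Then, for every $c\in\mathcal C$,
\[
\psi(c)_y=\varepsilon'_y(\psi(c))=\kappa_y^{-1}c_{\pi(y)},
\]
so $\mathcal C'$ is the image of $\mathcal C$ under this monomial transformation. This gives (c).

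The main obstacle is the hyperplane count in (b), in particular getting the arithmetic with $N_k$ exactly right. The rest of the argument is bookkeeping.
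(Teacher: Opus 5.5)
Your proof is correct, and for (b) and (c) it takes a different route from the paper. Part (a) and the first half of (b) are the same weight double count the paper uses; the paper just cites the Plotkin bound for (a).

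After establishing constant weight and the absence of zero coordinates, the paper invokes Bonisoli's theorem. This shows that $\mathcal C$ is monomially equivalent to a $t$-fold replicated simplex code, and comparing lengths gives $t=r$. The paper then reads off the multiplicity statement from this explicit model and transports it back along the equivalence. Part (c) follows because all such codes are equivalent to the same model.

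You reverse this logic. Your second count, over the hyperplane $\ker\varphi$, proves the multiplicity statement directly. The claim that $\varepsilon_x|_H\ne0$ whenever $\varepsilon_x\notin F^*\varphi$ is valid, because all $\varepsilon_x$ are already known to be nonzero and nonzero functionals with the same kernel are proportional. You then build the monomial map for (c) by hand from a linear isomorphism $\psi\colon\mathcal C\to\mathcal C'$, matching the size-$r$ fibres over corresponding projective points.

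Your argument is elementary and self-contained; in effect it reproves the case of Bonisoli's theorem needed here, the equality case of the Plotkin bound. The paper's argument is shorter but depends on an external classification theorem. In return it identifies these codes outright as the $r$-fold replicated simplex codes, as noted after the lemma. You recover that identification from your (c) once you check that the replicated simplex code has these parameters.

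One cosmetic slip: dividing by $q^{k-2}(q-1)$ uses $N_{k-1}=(q^{k-1}-1)/(q-1)$, not $N_k-N_{k-1}=q^{k-1}$. The identity $rN_k-s=rqN_{k-1}$ and the conclusion $s=r$ are nonetheless correct.
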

 
\begin{proof}
(a) Note that this is precisely the Plotkin bound for linear codes \cite{HuffmanPless2003}. Indeed, for a linear code of length $n$ and dimension $k$ over $F$ we have that its minimum distance is at most $nq^{k-1}(q-1)/(q^k-1)$, and for $n=rN_k$ this is exactly $rq^{k-1}$, as $N_k(q-1)=q^k-1$.

(b) Suppose that the minimum distance of $\mathcal C$ is $rq^{k-1}$, so $\mathcal C$ satisfy the Plotkin bound. Note that if $\varepsilon_x\ne0$, then its kernel has dimension $\dim C - \dim F= k-1$, so there are exactly $q^k-q^{k-1}$ codewords with $c_x\ne0$. Therefore the sum of the weights of the nonzero codewords is $n'(q^k-q^{k-1})$, where $n'$ is the number of $x\in X$ with $\varepsilon_x\ne0$, and this is at most $(q^k-1)rq^{k-1}$. As all the $q^k-1$ nonzero codewords have weight at least $rq^{k-1}$, we have that $n'=n$ and all of them have weight exactly $rq^{k-1}$. In particular, $\mathcal C$ has no zero coordinates and only one nonzero weight.

Therefore, by Bonisoli's theorem \cite{Bonisoli}, $\mathcal C$ is monomially equivalent to the $t$-fold replicated simplex code of dimension $k$ for some $t\ge1$, that is, the code with a generator matrix whose columns are $t$ copies of a nonzero vector of each one-dimensional subspace of $F^k$. This code has length $tN_k$, so comparing lengths we get $t=r$. Now, note that in the $r$-fold replicated simplex code we have that for every nonzero $\varphi\in\mathcal C^*$ there are exactly $r$ coordinates $x$ with $\varepsilon_x\in F^*\varphi$. This property is preserved by monomial equivalence, and therefore it also holds for $\mathcal C$, which proves (b).

(c) It follows from the proof of (b), as every code of length $rN_k$, dimension $k$ and minimum distance $rq^{k-1}$ is monomially equivalent to the $r$-fold replicated simplex code of dimension $k$. \qed
\end{proof}

Note that the codes of Lemma \ref{lem:simplex}(c) are the replicated simplex codes, and for $r=1$ they are the simplex codes \cite{HuffmanPless2003}. For our purpose, note that for $k=2$, $q=3$ and $r=1$, Lemma \ref{lem:simplex}(c) says that every $[4,2,3]_3$ code is monomially equivalent to the tetracode.

The following lemma will be useful to compare twisted group codes with group codes. In particular, it describes the class of equivalence of the equivalence relation of the previous proof.
 
\begin{lemma}\label{lem:orbit}
Let $I$ be a nonzero ideal of $FG$, and for $g\in G$ let $\rho_I(g)\colon I\to I$ be the left multiplication by $g$. Then $\varepsilon_g=\varepsilon_1\circ\rho_I(g^{-1})$ for every $g\in G$. Moreover, the set
\[
S=\{s\in G:\ \varepsilon_1\circ\rho_I(s)\in F^*\varepsilon_1\}
\]
is a subgroup of $G$, and for all $g,h\in G$ we have $\varepsilon_h\in F^*\varepsilon_g$ if and only if $h\in gS$.
\end{lemma}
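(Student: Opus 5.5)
The plan is to obtain the formula for $\varepsilon_g$ from the coefficient identity \eqref{eq:coef} with $\alpha=1$, and then to read off the subgroup property and the coset description from the multiplicativity of $g\mapsto\rho_I(g)$.

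For the first claim, take $\alpha=1$ in \eqref{eq:coef}. For $y\in I$ the coefficient of $1$ in $g^{-1}y$ equals $y_g$, that is, $\varepsilon_1(\rho_I(g^{-1})y)=\varepsilon_g(y)$. This makes sense because $I$ is a left ideal, so $\rho_I(g^{-1})$ maps $I$ into $I$. Hence $\varepsilon_g=\varepsilon_1\circ\rho_I(g^{-1})$.

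Next we note that $\rho_I$ is a group homomorphism $G\to GL(I)$, since $\rho_I(gh)=\rho_I(g)\rho_I(h)$. Now consider the right action of $G$ on $I^*$ given by $\varphi\cdot s=\varphi\circ\rho_I(s)$. This action is compatible with scalars, so it induces an action on the set of lines $F^*\varphi$. Under this action $S$ is exactly the stabilizer of the line $F^*\varepsilon_1$. We must first check that $\varepsilon_1\ne0$ on $I$. This holds because $I\ne0$: some $y\in I$ has $y_g\ne0$ for some $g$, and then $\varepsilon_1(g^{-1}y)=y_g\ne0$. Being a stabilizer, $S$ is a subgroup. For a direct check, suppose $\varepsilon_1\circ\rho_I(s)=\lambda\varepsilon_1$ and $\varepsilon_1\circ\rho_I(t)=\mu\varepsilon_1$. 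Then $\varepsilon_1\circ\rho_I(st)=\lambda\varepsilon_1\circ\rho_I(t)=\lambda\mu\varepsilon_1$. Also $\varepsilon_1=\lambda\,\varepsilon_1\circ\rho_I(s^{-1})$, which gives closure under inverses. Clearly $1\in S$.

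For the coset description, write $\varepsilon_h=\varepsilon_1\circ\rho_I(h^{-1})$ and $\varepsilon_g=\varepsilon_1\circ\rho_I(g^{-1})$. The condition $\varepsilon_h\in F^*\varepsilon_g$ becomes $\varepsilon_1\circ\rho_I(h^{-1})\in F^*\,\varepsilon_1\circ\rho_I(g^{-1})$. Composing on the right with the invertible map $\rho_I(g)$ shows this is equivalent to $\varepsilon_1\circ\rho_I(h^{-1}g)\in F^*\varepsilon_1$, i.e. $h^{-1}g\in S$. Since $S$ is a subgroup, this is in turn equivalent to $g^{-1}h\in S$, i.e. $h\in gS$.

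The argument is routine, and the main care needed is consistency of the left/right order. In particular, one should take the action on $I^*$ to be a right action, so that $S$, rather than a conjugate or an inverse set, is what appears in the coset $gS$.
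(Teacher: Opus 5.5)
Your proposal is correct and follows essentially the same route as the paper: you derive $\varepsilon_g=\varepsilon_1\circ\rho_I(g^{-1})$ from \eqref{eq:coef} with $\alpha=1$, prove the subgroup property by direct computation (your stabilizer remark only repackages this), and obtain the coset criterion by composing with $\rho_I(g)$ to reach $h^{-1}g\in S$. Your explicit use of the subgroup property to pass from $h^{-1}g\in S$ to $g^{-1}h\in S$, i.e.\ $h\in gS$, spells out a step the paper leaves implicit.
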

 
\begin{proof}
Let $g\in G$ and $y \in I$. By \eqref{eq:coef} with $\alpha=1$ we have that the coefficient of $1$ in $\overline{g^{-1}}y$ is $y_g$, so $\varepsilon_g(y)=\varepsilon_1(g^{-1}y)= \varepsilon_1\circ\rho_I(g^{-1})$ as we wanted to prove. In particular if $\varepsilon_1=0$ then $\varepsilon_g(y)=\varepsilon_1\circ\rho_I(g^{-1})=0$ for all $g\in G, y\in I$ so $y=0$ and $I=0$. Therefore there exist at least one element in $I$ with nonzero coordinate in $1$.

Now we will prove that $S$ is a group. First, note that $\varepsilon_1\circ\rho_I(1) = \varepsilon_1\in F^*\varepsilon_1$ so $1\in S$. If $s,t\in S$, say $\varepsilon_1\circ\rho_I(s)=a\,\varepsilon_1$ and $\varepsilon_1\circ\rho_I(t)=b\,\varepsilon_1$, then $\varepsilon_1\circ\rho_I(t^{-1})=b^{-1}\,\varepsilon_1$ and
$\varepsilon_1\circ\rho_I(st^{-1})= \big(\varepsilon_1\circ\rho_I(s)\big)\circ\rho_I(t)=a\,\varepsilon_1\circ\rho_I(t^{-1})=ab^{-1}\,\varepsilon_1,$, so $st^{-1}\in S$ and $S$ is a subgroup.
 
Finally, let $g,h\in G$ such that $\varepsilon_h\in F^*\varepsilon_g$. Then, there exists $\lambda\in F^*$ such that $\varepsilon_h\in \lambda\varepsilon_g$. By the first formula, that means that $\varepsilon_1\circ\rho_I(h^{-1})=\lambda\,\varepsilon_1\circ\rho_I(g^{-1})$, and composing with $\rho_I(g)$ on the right we get that $\varepsilon_1\circ\rho_I(h^{-1}g)=\lambda\,\varepsilon_1 \in F^*\varepsilon_1$ so $h^{-1}g\in S$. As all implications are reversible, $\varepsilon_h\in F^*\varepsilon_g$ if and only if $h^{-1}g\in S$, that is, if and only if $h\in gS$. \qed
\end{proof}

We need the following technical result for finite fields.
\begin{lemma}\label{lemm:eta}
    Let $L=\F_{q^k}$, let $r\ge1$ and $n=rN_k$. let $\eta,a\in L^*$ such that $\eta F^*$ generates $L^*/F^*$, and let $H_a=\{y\in L:\Tr_{L/F}(ay)=0\}$. Then exactly $rN_{k-1}$ of the elements $\Tr_{L/F}(a\eta^{-i})$, $0\le i<n$, are zero.
\end{lemma}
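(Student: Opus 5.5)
The plan is to reduce the count to a counting problem in the projective space $L^*/F^*$. First, $H_a$ is the kernel of the nonzero $F$-linear functional $y\mapsto\Tr_{L/F}(ay)$. This functional is nonzero because the trace form is nondegenerate and $a\neq0$. Hence $H_a$ is an $F$-subspace of $L$ of dimension $k-1$. It follows that $H_a\setminus\{0\}$ is a union of $F^*$-cosets, and the number of such cosets is $(q^{k-1}-1)/(q-1)=N_{k-1}$. In other words, the image of $H_a\setminus\{0\}$ in $L^*/F^*$ is a set of exactly $N_{k-1}$ elements, namely a hyperplane of $\mathbb P^{k-1}(F)$.

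Next, $\Tr_{L/F}(a\eta^{-i})=0$ holds if and only if $\eta^{-i}\in H_a$. Since $H_a$ is stable under multiplication by $F^*$, this depends only on the class $\eta^{-i}F^*\in L^*/F^*$. The group $L^*/F^*$ is cyclic of order $(q^k-1)/(q-1)=N_k$, and by hypothesis it is generated by $\eta F^*$. Therefore the map $i\mapsto\eta^{-i}F^*$, for $0\le i<n=rN_k$, runs through the cyclic group exactly $r$ times: each class is hit by precisely $r$ indices $i$, namely those in a fixed residue class modulo $N_k$ (the inversion $i\mapsto -i$ is just a bijection of $\mathbb Z/N_k$).

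Combining the two steps, the number of indices $0\le i<n$ with $\Tr_{L/F}(a\eta^{-i})=0$ is $r$ times the number of classes of $L^*/F^*$ lying in $H_a$. That number of classes is $N_{k-1}$, so the count is $rN_{k-1}$, as claimed.

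There is no serious obstacle; the only points needing care are the nondegeneracy of the trace (cited from \cite[Section 2.3]{LN}), which guarantees $\dim_F H_a=k-1$ rather than $k$, and the order computation $|L^*/F^*|=N_k$, which makes the $r$-fold covering exact.
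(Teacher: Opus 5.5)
Your proposal is correct and follows essentially the same route as the paper's proof. Both arguments show that $H_a\setminus\{0\}$ is a union of $N_{k-1}$ classes of $L^*/F^*$. Both then note that, since $\eta F^*$ generates this cyclic group of order $N_k$, each class contains $\eta^{-i}$ for exactly $r$ indices $0\le i<rN_k$, which gives the count $rN_{k-1}$.
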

\begin{proof}
Let $L=\F_{q^k}$, that is, $[L:F]=k$ and $L^*/F^*$ has $(q^k-1)/(q-1)=N_k$ elements. Note that scalar multiplication defines an action of the multiplicative group $F^*$ on the $F$-vector space $L$. Therefore, if $H$ is an $F$-subspace, it is invariant under this action, and  for any $x \in H \setminus \{0\}$ we have that $x F^* \subseteq H \setminus \{0\}$. Because group orbits partition the set, $H \setminus \{0\}$ is a disjoint union of these cosets.

For $a\in L^*$ let $H_a=\{y\in L:\Tr_{L/F}(ay)=0\}$. The trace $\Tr_{L/F}\colon L\to F$ is $F$-linear and nonzero \cite[Section 2.3]{LN}, so $y\mapsto\Tr_{L/F}(ay)$ is a nonzero $F$-linear map. Therefore $H_a$ is an $F$-subspace of $L$ of dimension $k-1$. As it is a $F$-subspace, $H_a\setminus\{0\}$ is a union of classes of $L^*/F^*$. Each class has $q-1$ elements and $|H_a\setminus\{0\}|=q^{k-1}-1=N_{k-1}(q-1)$, so $H_a\setminus\{0\}$ is the union of exactly $N_{k-1}$ classes of $L^*/F^*$.

Now we can prove the result. Note that $\eta F^*$ generates $L^*/F^*$ is equivalent to
say that for every integer $j$, $\eta^j\in F^*$ if and only if $N_k$ divides $j$. Now, for $0\le i,i'<n$, the elements $\eta^{-i}$ and $\eta^{-i'}$ are in the same class of $L^*/F^*$ if and only if $\eta^{i'-i}\in F^*$, that is, if and only if $i\equiv i'\pmod{N_k}$. Therefore the classes of $\eta^{-i}$ for $0\le i<N_k$ are pairwise different, and as there are $N_k$ classes in $L^*/F^*$, they are all the classes. Since $n=rN_k$, each class contains $\eta^{-i}$ for exactly $r$ values of $i\in\{0,\dots,n-1\}$. Now, $\Tr_{L/F}(a\eta^{-i})=0$ if and only if $\eta^{-i}\in H_a$, that is, if and only if the class of $\eta^{-i}$ is one of the $N_{k-1}$ classes contained in $H_a$. This happens for exactly $rN_{k-1}$ values of $i$, which proves the claim.
\end{proof}
 
We can now prove the main result of this section.
 
\begin{theorem}\label{thm:simplex}
Let $k\in\{2,3\}$, let $r\ge1$ and let $n=rN_k$.
\begin{enumerate}[(i)]
\item There is a minimal ideal of $F[x]/(x^n-c)$, for some $c\in F^*$, which is a code of length $n$, dimension $k$ and minimum distance $rq^{k-1}$.
\item If $k$ does not divide $q-1$, there is also a minimal ideal of $FC_n$ of length $n$, dimension $k$ and minimum distance $rq^{k-1}$.
\item If $k$ divides $q-1$ and $k$ does not divide $r$, then every ideal of a group algebra $FG$ with $|G|=n$ and dimension $k$ has minimum distance at most $rq^{k-1}-1$. Consequently, the code of (i) is not monomially equivalent to any group code.
\end{enumerate}
\end{theorem}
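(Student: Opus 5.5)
For (i) I will realise the code as a twisted trace code. Let $L=\F_{q^k}$ and fix $\eta\in L^*$ such that $\eta F^*$ generates the cyclic group $L^*/F^*$ of order $N_k$; a generator of $L^*$ works. Put $G=\langle g\rangle\cong C_n$, $\rho(g^i)=\eta^i$ for $0\le i<n$, and $c=\eta^n\in F^*$; here $\eta^n\in F^*$ because $N_k\mid n$. With the cocycle $\alpha$ of Example \ref{ex:consta} for this $c$, the map $\rho$ is an $\alpha$-character. The span $F[\rho(G)]$ contains $F[\eta]$, and this is all of $L$: if $\eta$ lay in a proper subfield $K$, then $|K^*F^*/F^*|<N_k$, contradicting that $\eta F^*$ generates $L^*/F^*$. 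By Lemma \ref{lem:trace}, $T_\rho$ is then a minimal ideal of $F^\alpha C_n\cong F[x]/(x^n-c)$ of dimension $k$. Its nonzero codewords are $t_\rho(a)$ with $a\ne0$, whose coordinates are $\Tr_{L/F}(a\eta^{-i})$. By Lemma \ref{lemm:eta} exactly $rN_{k-1}$ of these vanish, so every nonzero codeword has weight $rN_k-rN_{k-1}=rq^{k-1}$.

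For (ii) I want $c=1$, which requires $\eta^n=1$, that is, $\eta$ of order dividing $n$ with $\eta F^*$ still generating $L^*/F^*$. Take $\eta=\gamma^{q-1}$ with $\gamma$ a generator of $L^*$, so $\eta$ has order $N_k$ and $\eta^n=1$. It remains to check that $\eta F^*$ generates $L^*/F^*$, i.e.\ that $\langle\eta\rangle\cap F^*=1$. Since $F^*=\langle\gamma^{N_k}\rangle$, this intersection has order $\gcd(N_k,q-1)$. As $N_k\equiv k\pmod{q-1}$, we get $\gcd(N_k,q-1)=\gcd(k,q-1)$, which equals $1$ for $k\in\{2,3\}$ prime with $k\nmid q-1$. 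Then the construction of (i) with $\alpha=1$ gives a minimal ideal of $FC_n$.

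For (iii), suppose an ideal $I$ of $FG$ with $|G|=n$ and $\dim I=k$ has minimum distance $rq^{k-1}$. By Lemma \ref{lem:simplex}(b), every nonzero $\varphi\in I^*$ has exactly $r$ coordinates in $F^*\varphi$. By Lemma \ref{lem:orbit} the classes of coordinates are the cosets $gS$, so $|S|=r$ and the map $gS\mapsto F^*\varepsilon_g$ is a bijection between $G/S$ and the points of $\mathbb P(I^*)$. This gives $[G:S]=N_k$. Now the map $s\mapsto\lambda_s$ defined by $\varepsilon_1\circ\rho_I(s)=\lambda_s\varepsilon_1$ is a homomorphism $S\to F^*$. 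The key step, and the main obstacle, is to derive a contradiction from $k\mid q-1$ and $k\nmid r$. My plan is to use the action of $G$ on $\mathbb P(I^*)$ by $\varphi\mapsto\varphi\circ\rho_I(h)$. This action is transitive with stabiliser of $F^*\varepsilon_1$ equal to $S$, so $\rho_I$ induces a transitive action through $PGL_k(F)$ on all $N_k$ projective points. Since $\rho_I$ preserves the code, the monomial automorphism group of the simplex code, namely all of $GL_k(F)$, must contain a subgroup acting regularly up to the stabilisers $S$.

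I then intend a determinant or counting argument. Since $k\mid q-1$, we have $N_k\equiv k\equiv0\pmod k$. The image of $G$ in $PGL_k(F)$ is transitive, so $k\mid|\bar G|$. I would try to show that elements of $p'$-order $k$-power acting fixed-point-freely on $\mathbb P^{k-1}$ force, via $\det$ and the scalars $\lambda_s$, a $k$-th root of unity condition that makes $k\mid|S|=r$. Concretely, I would compute how the homomorphism $g\mapsto\det\rho_I(g)$ interacts with $\lambda$ on $S$. If this proves too slippery, the fallback is explicit classification: by \cite{GGMMM}, $I$ is equivalent to an abelian group code. For abelian $G$ the code is then described by characters over $\F_{q^k}$ with $\rho(G)$ a subgroup of $L^*$ of order dividing $n$. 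Transitivity forces $\rho(G)F^*=L^*$, hence $N_k(q-1)\mid|\rho(G)|\cdot|\rho(G)\cap F^*|$-type divisibility. Combined with $\gcd(N_k,q-1)=k$, this gives $k\mid r$.

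The final sentence of (iii) follows from Lemma \ref{lem:simplex}(c), since a code with the parameters of (i) that is monomially equivalent to a group code would be a group code with minimum distance $rq^{k-1}$.
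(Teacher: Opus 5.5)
Parts (i) and (ii) are correct and follow the paper's construction. Your subfield-counting argument for $F[\eta]=L$ even avoids using that $k$ is prime.

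The gap is in (iii). You correctly set up the transitive action of $G$ on $\mathbb P(I^*)$ with point stabiliser $S$ and $|S|=r$, but you never derive $k\mid r$. The determinant / ``$k$-th root of unity'' plan is only announced, and it aims the wrong way: the point is that an element of order $k$ \emph{cannot} act fixed-point-freely. The missing step is short:
\begin{enumerate}[(1)]
\item Because $k\mid q-1$, we have $N_k\equiv k\equiv 0\pmod k$, so $k\mid n$ and Cauchy gives $x\in G$ of order $k$.
\item The operator $T\colon\psi\mapsto\psi\circ\rho_I(x)$ on $I^*$ satisfies $T^k=\mathrm{id}$, and $X^k-1$ has $k$ distinct roots in $F$. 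So $T$ is diagonalisable over $F$ and has an eigenvector $\varphi$ with eigenvalue $\lambda$; that is, $x$ fixes the point $F^*\varphi$.
\item By Lemma \ref{lem:simplex}(b), $F^*\varphi=F^*\varepsilon_g$ for some $g\in G$. Combining $\varepsilon_g\circ\rho_I(x)=\lambda\varepsilon_g$ with $\varepsilon_g=\varepsilon_1\circ\rho_I(g^{-1})$ gives $\varepsilon_1\circ\rho_I(g^{-1}xg)=\lambda\varepsilon_1$.
\item Hence $g^{-1}xg\in S$, and $k$ divides $|S|=r$, a contradiction.
\end{enumerate}
This needs no determinant and no use of $s\mapsto\lambda_s$, and it works for arbitrary $G$.

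Your fallback via \cite{GGMMM} does not close the gap either. Describing the abelian code by a character $\rho\colon G\to\F_{q^k}^*$ presupposes that the code is a simple module with endomorphism field of degree $k$, so that it is a twisted trace code. You do not justify this, and semisimplicity is unavailable since $p$ may divide $n$. One could argue it as follows: a proper nonzero left ideal $J\subset I$ is killed by some nonzero $\varphi\in I^*$, which by Lemma \ref{lem:simplex}(b) is proportional to a coordinate map; but nonzero left ideals of $FG$ have no zero coordinates.

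Moreover, the divisibility you write does not follow from $\rho(G)F^*=L^*$. That equality gives $|\rho(G)|=N_k\,|\rho(G)\cap F^*|$. Your condition $N_k(q-1)\mid|\rho(G)|\cdot|\rho(G)\cap F^*|$ fails, for example, when $q=7$, $k=2$ and $|\rho(G)|=16$. To reach $k\mid r$ you would still have to use that the fibres of $G\to L^*/F^*$ have size $r$ and then compare $k$-adic valuations. None of this is done.
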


\begin{proof}

(i) Let $\zeta$ be a generator of the cyclic group $L^*$. Then clearly $\zeta F^*$ generates $L^*/ F^*$ and we can apply the previous lemma. In particular $\zeta^{N_k}\in F^*$ and $c=\zeta^n=(\zeta^{N_k})^r$ lies in $F^*$. Let $C_n=\langle g\rangle$ and let $\alpha$ be the cocycle of Example \ref{ex:consta} for this $c$.  It is a direct computation to check that $\rho(g^i)=\zeta^i$ for $0\le i<n$ is an $\alpha$-character. In addition, $F[\rho(C_n)] =F[\zeta] = L$. Therefore, by Lemma \ref{lem:trace}, $T_\rho$ is a two-sided ideal of $F^\alpha C_n$ of dimension $k$ which is a minimal left ideal, and so it is also a minimal ideal. By Example \ref{ex:consta}, $F^\alpha C_n\cong F[x]/(x^n-c)$ by an isomorphism which only rescales the basis, and therefore $T_\rho$ is a minimal ideal of $F[x]/(x^n-c)$, up to monomial equivalence.

Now we compute the weights for each $a\in L^*$. To do so, we have that the coordinate of $t_\rho(a)$ at $g^i$ is $\Tr_{L/F}(a\rho(g^i)^{-1})=\Tr_{L/F}(a\zeta^{-i})$. Therefore, by the previous lemma $t_\rho(a)$ has exactly $rN_{k-1}$ zero coordinates and its weight is
\[
n-rN_{k-1}=r(N_k-N_{k-1})=rq^{k-1}.
\]
Therefore the minimum distance of $T_\rho$ is $rq^{k-1}$.
 
(ii) Now assume that $k$ does not divide $q-1$. Then we have that $N_k=1+q+\dots+q^{k-1}\equiv k\pmod{q-1}$, and therefore $\gcd(N_k,q-1)=\gcd(k,q-1)=1$. Now let $\xi\in L^*$ be an element of order $N_k$ and we will prove that $F[\xi]=L$. To do so, note that $\xi^j\in F^*$ if and only if $\xi^{j(q-1)}=1$, that is, if and only if $N_k$ divides $j(q-1)$, and as $\gcd(N_k,q-1)=1$, that is true if and only if $N_k$ divides $j$. Therefore $\xi F^*$ generates $L^*/F^*$, and in particular $\xi\notin F$, as $N_k>1$. Then $F[\xi]$ is a subfield of $L$ different from $F$, for which $[F[\xi]:F]$ divides $[L:F]=k$, which is prime, so we get that $F[\xi]=L$. 

Now let $\chi\colon C_n\to L^*$, $g^i\mapsto\xi^i$. The map is well defined as $\xi^n=(\xi^{N_k})^r=1$, and also it is easy to check that it is a group homomorphism with $F[\chi(C_n)]=F[\xi]=L$. By Lemma \ref{lem:trace}, $T_\chi$ is a minimal ideal of $FC_n$ of dimension $k$. Finally, for $a\in L^*$ the coordinate of $t_\chi(a)$ at $g^i$ is $\Tr_{L/F}(a\xi^{-i})$, so by Lemma \ref{lemm:eta} with $\eta=\xi$, every nonzero codeword of $T_\chi$ has weight $n-rN_{k-1}=rq^{k-1}$.
 
(iii) Suppose that $I$ is an ideal of $FG$ with $|G|=n$, of dimension $k$ and minimum distance at least $rq^{k-1}$. By Lemma \ref{lem:simplex}(a), we have that the minimum distance of $I$ is exactly $rq^{k-1}$. 

Now, let $S$ be the subgroup of Lemma \ref{lem:orbit}. Consider the equivalence relation on $G$ given by $g\sim h$ if $\varepsilon_h\in F^*\varepsilon_g$. By Lemma \ref{lem:orbit}, the class of $g$ is the left coset $gS$, which has $|S|$ elements. On the other hand, by Lemma \ref{lem:simplex}(b) applied to $\varphi=\varepsilon_g$, the class of $g$ has exactly $r$ elements. Therefore $|S|=r$.

Now, $q\equiv1\pmod k$, so $N_k=1+q+\dots+q^{k-1}\equiv k\equiv0\pmod k$, and $k$ divides $n=rN_k=|G|$. As $k$ is prime, by Cauchy's theorem there is an element $x\in G$ of order $k$. Consider the linear map $T\colon I^*\to I^*$, $\psi\mapsto\psi\circ\rho_I(x)$. We have that $T^k(\psi)=\psi\circ\rho_I(x)^k=\psi\circ\rho_I(x^k)=\psi\circ\rho_I(1)=\psi$ for all $\psi\in I^*$, so $T^k=\mathrm{id}$. As $k$ divides $q-1$, the polynomial $X^k-1$ has $k$ different roots in $F$, so $T$ is diagonalizable over $F$, and, in particular, there are $0\ne\varphi\in I^*$ and $\lambda\in F^*$ with $\varphi\circ\rho_I(x)=\lambda\,\varphi$.
 
 By Lemma \ref{lem:simplex}(b), for such $,\varphi$ there exits $g\in G$ with $\varepsilon_g\in F^*\varphi$, say $\varepsilon_g=\nu\varphi$ with $\nu\in F^*$. Multiplying the last identity by $\nu$ we get $\varepsilon_g\circ\rho_I(x)= \nu\varphi\circ\rho_I(x)=\lambda\,\nu\varphi = \lambda\,\varepsilon_g$, and by Lemma \ref{lem:orbit} this means that
\[
\varepsilon_1\circ\rho_I(g^{-1})\circ\rho_I(x)=\lambda\,\varepsilon_1\circ\rho_I(g^{-1}).
\]
Composing with $\rho_I(g)$ on the right, we get $\varepsilon_1\circ\rho_I(g^{-1}xg)=\lambda\,\varepsilon_1$, that is, $g^{-1}xg\in S$. But $ord(g^{-1}xg) = ord(x)=k$, so by Lagrange's theorem $k$ divides $|S|=r$, which is a contradiction. Therefore every ideal of $FG$ of dimension $k$ has minimum distance at most $rq^{k-1}-1$.

Finally, let $\mathcal C$ be the code of (i), and suppose that it is monomially equivalent to an ideal $J$ of a group algebra $FH$. Monomially equivalent codes have the same length, dimension and minimum distance, so $|H|=n$, $\dim_FJ=k$ and the minimum distance of $J$ is $rq^{k-1}$, which contradicts what we have just proved.\qed
\end{proof}

\begin{remark}
    Note that $k=2$ divides $q-1$ for all $q$ odd. Therefore, Theorem \ref{thm:simplex} ii) applies $k=2$ and $q$ even or $k=3$ and $q\not =1 \mod 3$.
\end{remark}
 
Therefore, in case (iii) the codes of Theorem \ref{thm:simplex} appear in case (b) of Theorem \ref{thm:main}, but they are not monomially equivalent to any group code. We write explicitly the parameters in each dimension.
 
\begin{corollary}\label{cor:families}
\begin{enumerate}[(a)]
\item Let $q$ and $r$ be odd. There is a $[(q+1)r,2,qr]_q$ irreducible constacyclic code which is not monomially equivalent to any group code, and every two-dimensional group code of length $(q+1)r$ over $\F_q$ has minimum distance at most $qr-1$.
\item Let $q\equiv1\pmod 3$ and let $r$ be an integer not divisible by $3$. There is a $[(q^2+q+1)r,3,q^2r]_q$ irreducible constacyclic code which is not monomially equivalent to any group code, and every three-dimensional group code of length $(q^2+q+1)r$ over $\F_q$ has minimum distance at most $q^2r-1$.
\end{enumerate}
By Lemma \ref{lem:simplex}(a), in both cases the constacyclic code has the largest possible minimum distance for its length and dimension.
\end{corollary}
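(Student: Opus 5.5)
The corollary is a specialization of Theorem \ref{thm:simplex} to the two admissible dimensions. The work is to check that the hypotheses of parts (i) and (iii) hold under the stated conditions on $q$ and $r$, and then to read off the parameters.

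For (a) we take $k=2$, so $N_2=q+1$, the length is $n=(q+1)r$, and $rq^{k-1}=qr$. Since $q$ is odd, $q-1$ is even and $k=2$ divides $q-1$. Since $r$ is odd, $2$ does not divide $r$. Theorem \ref{thm:simplex}(i) then gives a minimal ideal of $F[x]/(x^n-c)$ with parameters $[(q+1)r,2,qr]_q$. This is an irreducible constacyclic code, because a minimal ideal of $F[x]/(x^n-c)$ is by definition such a code. Theorem \ref{thm:simplex}(iii) applies, so every two-dimensional ideal of any $FG$ with $|G|=(q+1)r$ has minimum distance at most $qr-1$. Since monomial equivalence preserves length, dimension and minimum distance, our code cannot be monomially equivalent to any group code.

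For (b) we take $k=3$, so $N_3=q^2+q+1$ and $rq^{k-1}=q^2r$. The condition $q\equiv1\pmod 3$ says exactly that $3$ divides $q-1$, and we assume $3\nmid r$. Theorem \ref{thm:simplex}(i) and (iii) then give the $[(q^2+q+1)r,3,q^2r]_q$ irreducible constacyclic code, the bound $q^2r-1$ for all three-dimensional group codes of that length, and non-equivalence to any group code, by the same argument as in (a).

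The final sentence follows from Lemma \ref{lem:simplex}(a). Any linear code of length $rN_k$ and dimension $k$ has minimum distance at most $rq^{k-1}$, and the constructed codes attain this value. There is no real obstacle here; the only point needing care is matching ``$k$ divides $q-1$'' with the parity of $q$ when $k=2$ and with $q\equiv 1\pmod 3$ when $k=3$.
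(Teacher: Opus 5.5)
Your proposal is correct and is the paper's argument: the paper's proof is the single line ``It is Theorem \ref{thm:simplex} with $k=2$ and $k=3$, respectively,'' and you apply Theorem \ref{thm:simplex}(i) and (iii) in the same way. You also write out what that line leaves implicit: the hypothesis checks ($q$ odd gives $2\mid q-1$, and $q\equiv1\pmod 3$ gives $3\mid q-1$), the fact that monomial equivalence preserves length, dimension and minimum distance, and the Plotkin-optimality remark via Lemma \ref{lem:simplex}(a).
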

 
\begin{proof}
It is Theorem \ref{thm:simplex} with $k=2$ and $k=3$, respectively. \qed
\end{proof}
 
For example, if $q=5$ we get a $[6,2,5]_5$ ideal of $\F_5[x]/(x^6-2)$, whereas every two-dimensional ideal of $\F_5C_6$ and of $\F_5S_3$ has minimum distance at most $4$. In dimension $3$, the algebra $\F_4[x]/(x^{21}-\omega)$, with $\omega$ a generator of $\F_4^*$, contains ideals which are $[21,3,16]_4$ codes, and $\F_7[x]/(x^{57}-c)$ contains ideals which are $[57,3,49]_7$ codes for a suitable $c\in\F_7^*$. On the other hand, the largest minimum distance of a three-dimensional cyclic code is $12$ for length $21$ over $\F_4$, and $45$ for length $57$ over $\F_7$.
 
\begin{remark}\label{rem:sharp}
The bound of Corollary \ref{cor:families}(a) is sharp for $r=1$. To see this, let $q$ be odd, let $L=\F_{q^2}$ and let $\xi\in\F_{q^2}^*$ be an element of order $q+1$, and let $\chi\colon C_{q+1}\to\F_{q^2}^*$ be given by $g^i\mapsto\xi^i$. Note that $\xi\notin\F_q$, as $\xi^{q}=\xi^{-1}\neq\xi$ , so $\F_{q^2}=\F_{q}[\xi]$. In addition, $\chi$ is a group homomorphism, so it is a character for the trivial cocycle, and by Lemma \ref{lem:trace} we have that $T_\chi$ is a two-dimensional ideal of $\F_qC_{q+1}$. Also, note that $\xi^j\in\F_q^*$ if and only if $\xi^{jq}=\xi^j$, that is, $(q+1)| j(q-1)$, which is equivalent to $j$ been a multiple of $(q+1) / (gcd(q+1,q-1)= (q+1)/2$. 

Now let $a\in L^*$. As in the proof of Theorem \ref{thm:simplex}, $H_a\setminus\{0\}$ is exactly one class of $L^*/\F_q^*$, and the coordinate of $t_\chi(a)$ at $g^i$ vanishes if and only if $\xi^{-i}\in H_a$. By the previous paragraph, $\xi^{-i}$ and $\xi^{-i'}$, with $0\le i,i'\le q$, are in the same class if and only if $i\equiv i'\pmod{(q+1)/2}$, so every class contains at most two of the elements $\xi^{-i}$. Therefore $t_\chi(a)$ has weight at least $q-1$. Finally, if $a\neq0$ satisfies $\Tr(a)=0$, then $1$ and $\xi^{-(q+1)/2}=-1$ are in $H_a$. Hence $t_\chi(a)$ has exactly two zero coordinates, at $g^0$ and $g^{(q+1)/2}$, and its weight is $q-1$. As a result, $T_\chi$ is a two-dimensional group code of length $q+1$ and minimum distance $q-1$.
\end{remark}

\begin{remark}
Note that the only place where we have used that $k\in\{2,3\}$ is that $k$ is prime. In general, the same proof shows that for every $k\ge2$ and $r\ge1$ there is an irreducible constacyclic code of length $rN_k$, dimension $k$ and minimum distance $rq^{k-1}$. In addition, every group code with the same length and dimension has smaller minimum distance in case there is a prime $\ell$ which divides $\gcd(k,q-1)$ but not $r$.
\end{remark}

\begin{corollary}\label{tetracode}
The ternary tetracode, of parameters $[4,2,3]_3$, is an ideal of the commutative twisted group algebra $\F_3[x]/(x^4+1)$ of the cyclic group of order $4$, but it is not monomially equivalent to any group code.
\end{corollary}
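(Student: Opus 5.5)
This is the special case $q=3$, $k=2$, $r=1$ of Theorem \ref{thm:simplex}, so the plan is to specialize that result and then identify the code it produces with the tetracode. With these values, $n=rN_2=q+1=4$. The hypotheses of part (iii) hold: $k=2$ divides $q-1=2$, and $k=2$ does not divide $r=1$.

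For the existence part, we follow the proof of Theorem \ref{thm:simplex}(i). Take $L=\F_9$ and let $\zeta$ generate $\F_9^*$. Then $c=\zeta^4$ has order $2$, so $c=-1$. The twisted group algebra of Example \ref{ex:consta} is therefore $\F_3[x]/(x^4+1)$, which is commutative.

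The twisted trace code $T_\rho$, with $\rho(g^i)=\zeta^i$, is a minimal ideal of dimension $2$. Its minimum distance is $rq^{k-1}=3$. Through the rescaling isomorphism of Example \ref{ex:consta}, it becomes an ideal of $\F_3[x]/(x^4+1)$ that is a $[4,2,3]_3$ code. In fact, for the cocycle of Example \ref{ex:consta} the identification $x^i\mapsto\ov{g^i}$ involves no rescaling, so $T_\rho$ is literally an ideal of $\F_3[x]/(x^4+1)$.

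To call this ideal ``the tetracode'', we use Lemma \ref{lem:simplex}(c): every $[4,2,3]_3$ code is monomially equivalent to the tetracode. If one wants literal equality rather than equivalence, one can check it by hand. For instance, $x^2+x+2$ divides $x^4+1$ over $\F_3$, and the ideal it generates has generator rows $(2,1,1,0)$ and $(0,2,1,1)$. A short check of the eight nonzero codewords shows they all have weight $3$, and this is a standard presentation of the tetracode up to the usual conventions. I would state the result up to monomial equivalence, which is the convention the paper uses.

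For non-equivalence, Theorem \ref{thm:simplex}(iii) with $q=3$, $k=2$, $r=1$ says that every two-dimensional ideal of $FG$ with $|G|=4$ has minimum distance at most $2$. A monomially equivalent code would have the same length, dimension and minimum distance. This covers both $C_4$ and $C_2\times C_2$. The same holds for any code monomially equivalent to the tetracode, since monomial equivalence is transitive.

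The only delicate point is matching the constructed code with the tetracode as named. Lemma \ref{lem:simplex}(c) resolves this, so I expect no real obstacle.
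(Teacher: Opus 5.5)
Your proposal is correct and matches the paper's proof. You specialize Theorem \ref{thm:simplex} to $q=3$, $k=2$, $r=1$, so that $\zeta^4=-1$ and the algebra is $\F_3[x]/(x^4+1)$. You identify the resulting $[4,2,3]_3$ ideal with the tetracode via Lemma \ref{lem:simplex}(c), and rule out every group code of length $4$ by part (iii). Your explicit check with the generator $x^2+x+2$ is the same presentation $\{(2a,a+2b,a+b,b)\}$ that the paper gives in the remark following the corollary.
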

\begin{proof}
We will use the previous theorem with $q=3$, $k=2$ and $r=1$. To do so, note that a generator $\zeta$ of $\F_9^*$ has order $8$, so $\zeta^4=-1$ and the algebra of Theorem \ref{thm:simplex}(i) is $\F_3[x]/(x^4+1)$. By Lemma \ref{lem:simplex}(c), the tetracode is monomially equivalent to the code of Theorem \ref{thm:simplex}(i). In addition, by Theorem \ref{thm:simplex}(iii) every two-dimensional group code of length $4$ over $\F_3$ has minimum distance at most $2$. As a result, the tetracode is not monomially equivalent to any group code. \qed
\end{proof} 

\begin{remark}
We can also describe the tetracode explicitly as an ideal. For this, let $G=\langle g\rangle\cong C_4$ and let $\alpha$ be as in Example \ref{ex:consta} with $c=-1$, so that $\F_3^\alpha G\cong\F_3[x]/(x^4+1)$. Over $\F_3$ we have that $x^4+1=(x^2+x+2)(x^2+2x+2)$, and also $x^2(x^2+x+2)\equiv(x+1)(x^2+x+2)$ modulo $x^4+1$. Therefore the ideal generated by $x^2+x+2$ is $\mathcal C=\{(a+bx)(x^2+x+2):a,b\in\F_3\}$, that is, in coordinates,
\[
\mathcal C=\{(2a,\ a+2b,\ a+b,\ b)\ :\ a,b\in\F_3\},
\]
which is the tetracode \cite{MS}. In the notation of Theorem \ref{thm:main}, we have that $\mathcal C=T_\rho$, where $\rho(g^i)=\zeta^i$ and $\zeta\in\F_9$ is a root of $x^2+2x+2$, which is a primitive $8$th root of unity. Note that the cocycle is not a coboundary. Indeed, $x^4+1$ has no root in $\F_3$, so $\F_3^\alpha G$ has no one-dimensional module by Lemma \ref{lem:onedim}. As a result, the theorem of \cite{GGMMM} fails for twisted group codes already in dimension $2$, and even for a commutative twisted group algebra of a cyclic group.
\end{remark}

However, in case (b) of Theorem \ref{thm:main} the ideal may still coincide with a group code, as it is shown in the following example.

\begin{example}
Let $F=\F_3$, $G=S_3$, and let $\alpha(g,h)=-1$ in case $g$ and $h$ are both odd permutations and $\alpha(g,h)=1$ otherwise. Note that $\alpha$ is the inflation along the sign homomorphism $S_3\to C_2$ of the cocycle of $\F_3[x]/(x^2+1)$, so it is a cocycle.

Now we will see that it is not a coboundary. To do so, note that if $\tau$ be a transposition, then the restriction of $\alpha$ to $\langle\tau\rangle$ gives $\ov\tau^{\,2}=-1$, so $\F_3^\alpha\langle\tau\rangle\cong\F_3[x]/(x^2+1)$. Note that this algebra has no one-dimensional module, because $x^2+1$ has no root in $\F_3$. Therefore, by Lemma \ref{lem:onedim}, the restriction of $\alpha$ to $\langle\tau\rangle$ is not a coboundary, and then neither is $\alpha$.

Now let $i\in\F_9$ with $i^2=-1$. Then the map $\rho$ with $\rho(g)=1$ for $g$ even and $\rho(g)=i$ for $g$ odd is an $\alpha$-character. In addition, $\Tr_{\F_9/\F_3}(u+vi)=2u$ for all $u,v\in\F_3$. It follows that $T_\rho$ consists of the vectors which are constant on $A_3$ and on $S_3\setminus A_3$. Finally, this $[6,2,3]_3$ code coincides with the group code $\F_3S_3\cdot\sum_{h\in A_3}h$, and by Proposition \ref{prop:cyclic} it is also an irreducible constacyclic code of length $6$.
\end{example}

Finally, we show that the assumption that $I$ is a two-sided ideal cannot be dropped in the description of $I$ as a twisted trace code given in Theorem \ref{thm:main}(b). Note that this assumption was used in the proof of Theorem \ref{thm:main} to show that $D\neq F$.

\begin{example}\label{ex:quaternion}
Let $F=\F_3$ and $G=C_2\times C_2=\{1,a,b,ab\}$, and consider the matrices
\[
\mathbf i=\begin{pmatrix}0&1\\-1&0\end{pmatrix},\qquad\mathbf j=\begin{pmatrix}1&1\\1&-1\end{pmatrix},\qquad\mathbf k=\mathbf i\mathbf j=\begin{pmatrix}1&-1\\-1&-1\end{pmatrix}
\]
in $M_2(\F_3)$. A direct computation gives $\mathbf i^2=\mathbf j^2=-1$ and $\mathbf j\mathbf i=-\mathbf k$, and then $\mathbf k^2=-1$ as well. In addition, together with the identity, these matrices form a basis of $M_2(\F_3)$. We also have that the product of any two of the matrices $1,\mathbf i,\mathbf j,\mathbf k$ is, up to the sign, one of them. As a result, $M_2(\F_3)=\F_3^\alpha G$, where $\alpha$ is the cocycle determined by $\ov a=\mathbf i$, $\ov b=\mathbf j$ and $\ov{ab}=\mathbf k$.

Now, the left ideal generated by $1+\mathbf i+\mathbf j$ is
\[
\{(\lambda-\mu,\ \lambda+\mu,\ \lambda,\ \mu)\ :\ \lambda,\mu\in\F_3\}
\]
with respect to the basis $1,\mathbf i,\mathbf j,\mathbf k$. It is again a $[4,2,3]_3$ code, so by Lemma \ref{lem:simplex}(c) it is monomially equivalent to the tetracode. Since every group of order $4$ is abelian, Corollary \ref{tetracode} shows that it is not monomially equivalent to any group code, left or two-sided. On the other hand, we have that $\ov a\,\ov b=-\ov b\,\ov a$, so by Corollary \ref{cor:noncomm} this algebra has no two-sided ideals of dimension less or equal to $3$.
\end{example}

\section{Concluding remarks}\label{sec:concl}

Theorem~\ref{thm:main} reduces the study of twisted group codes of dimension at most $3$ to two families of codes. When the cocycle is a coboundary, the code is monomially equivalent to an abelian group code. In any other case, the code is monomially equivalent to an irreducible constacyclic code.

The second family cannot be avoided as it can be seen in section \ref{sec:examples}. In particular, we have proved that the tetracode $[4,2,3]_3$ cannot be monomially equivalent to any group code. In addition, we prove that every two-dimensional group code of length $4$ over $\mathbb F_3$ has minimum distance at most $2$, so we also show that twisted group codes can have better parameters than all group codes with the same length and dimension.

Several questions remain open. Our results concern two-sided ideals and we do not know whether every one-sided twisted group code of dimension at most $3$ are also monomially equivalent to a one-sided ideal of a twisted group algebra of an abelian group. The example ~\ref{ex:quaternion} points in this direction as the left ideal considered there is a $[4,2,3]_3$ code, hence monomially equivalent to the tetracode \cite{MS}. This is an ideal of the commutative algebra $\mathbb F_3[x]/(x^4+1)$. However, the proof of Theorem~\ref{thm:main} does not apply to one-sided ideals, since it uses that the endomorphism ring $D$ of a two-sided ideal is strictly larger than $F$.

Also it would be of interest to find an analogue of Theorem~\ref{thm:main} for the twisted skew group rings of \cite{BBdlC}, which generalize twisted group algebras.

Finally, it would be of interest to determine for which lengths and dimensions twisted trace codes have better parameters than all group codes, as occurs with the tetracode.

\section*{Declaration of competing interest}
The author declares that he has no known competing financial interests or personal relationships that could have appeared to influence the work reported in this paper.

\section*{Data availability}
No data was used for the research described in the article.

\section*{Acknowledgments}
All the examples in this paper have been verified by computer with assistance from Claude (Anthropic). This research was supported by the Spanish Ministry of Science, Innovation, and Universities under the FPU 2023 grant programme, and the project PID2023-149203NB-I00

\section*{Declaration of Generative AI and AI-assisted technologies in the manuscript preparation process}
During the preparation of this work the author used Claude (Anthropic) to enhance the writing of the manuscript, check the proofs and compute the examples. After using this tool, the author reviewed, verified and edited all mathematical content and takes full responsibility for the content of the published article.

\end{document}